\theoremstyle{plain}
\newtheorem{thm}{Theorem}[subsection]
\newtheorem{cor}[thm]{Corollary}
\newtheorem{lem}[thm]{Lemma}
\newtheorem{prop}[thm]{Proposition}
\theoremstyle{definition}
\newtheorem{defn}[thm]{Definition}
\newtheorem{asum}[thm]{Hypothesis}
\newtheorem{ex}[thm]{Example}
\newtheorem{rem}[thm]{Remark}
\newcommand{\V}{\mathcal{V}}
\newcommand{\rank}{\textrm{rank}}
\newcommand{\Ext}{\operatorname{Ext}}
\newcommand{\Hom}{\operatorname{Hom}}
\newcommand{\Ann}{\operatorname{Ann}}
\newcommand{\MaxSpec}{\operatorname{MaxSpec}}
\newcommand{\Ker}{\textrm{Ker}}
\newcommand{\dive}{\operatorname{div}}
\newcommand{\resstar}{\ensuremath{\operatorname{res}^{*}}}
\newcommand{\res}{\ensuremath{\operatorname{res}}}
\newcommand{\0}{\bar 0}
\newcommand{\1}{\bar 1}
\newcommand{\HH}{\operatorname{H}}
\newcommand{\gl}{\mathfrak{gl}}
\newcommand{\Lie}{\ensuremath{\operatorname{Lie}}}
\numberwithin{equation}{subsection}
\def\Z{{\mathbb Z}}
\def\:{\colon}
\def\b{\mathfrak{b}}
\def\t{\mathfrak{t}}
\def\la{\lambda}
\newcommand{\fg}{\mathfrak{g}}
\def \ff{\mathfrak{f}}
\def \ft{\mathfrak{t}}
\def \fh{\mathfrak{h}}
\def \fe{\mathfrak{e}}
\def \fa{\mathfrak{a}}
\def \fh{\mathfrak{h}}
\def \fa{\mathfrak{a}}
\def \sl{\mathfrak{sl}}
\def\La{\mathfrak{g}}
\def\C{{\mathbb C}}
\begin{document}
\title[On cohomology and support varieties for  Lie superalgebras]
{On cohomology and support varieties for  Lie superalgebras}\
\author{Irfan Bagci}
  \date{\today}

\address{Department of Mathematics \\
          University of California at  Riverside\\ Riverside, California 92521  }
\email{irfan@math.ucr.edu}

\subjclass[2000]{Primary 17B56, 17B10; Secondary 13A50}

\begin{abstract}Support varieties for Lie superalgebras over the complex numbers were introduced in \cite{BKN1} using the relative cohomology. In this paper we discuss finite generation of the relative cohomology rings for Lie superalgebras, we formulate a definition for  subalgebras which detect the cohomology,
 also  discuss realizability of support varieties. In the last section as an application we compute the relative cohomology ring of the Lie superalgebra
$\overline{S}(n)$ relative to the graded zero component $\overline{S}(n)_0$ and show that this ring is finitely generated.
We also compute support varieties of all simple modules in the category of finite dimensional $\overline{S}(n)$-modules
which are completely reducible over $\overline{S}(n)_0$.
\end{abstract}

\maketitle

\parskip=2pt

\section{Introduction}

\subsection{}Throughout the present article we work with the complex numbers $\C$ as the ground field. All vector spaces are assumed to be finite dimensional unless otherwise is noted. For a $\Z_2$-graded vector space $V= V_{\0} \oplus V_{\1}$, $\overline{v}$ will denote the $\Z_2$ degree of homogeneous element $v \in V$. A Lie superalgebra is a finite dimensional
$\Z_2$-graded vector spaces $\La=\La_{\0}\oplus \La_{\bar{1}}$ with a bracket
 $[ , ] : \La\otimes \La \rightarrow  \La$ which preserves the $\Z_2$-grading
and satisfies graded versions of the operations used to define Lie algebras.
 The even part  $\La_{\0}$ is a Lie algebra under the bracket. We view a Lie algebras as a Lie superalgebra concentrated in degree $\0$.  A Lie superalgebra $\fg$ will be called \emph{classical} if there is a connected reductive algebraic group $G_{\0}$ such that $\Lie(G_{\0}) = \fg_{\0}$ and an action of $G_{\0}$ on $\fg_{\1}$ which differentiates to the adjoint action of $\fg_{\0}$  on $\fg_{\1}$.

Given a Lie superalgebra ${\mathfrak g}$, $U(\La)$ will denote the universal enveloping algebra of $\fg$. $U(\fg)$ is an associative $\Z_2$-graded algebra. The category of $\La$-modules has objects that are left $U(\La)$-modules
which are $\Z_2$-graded and the action of $U(\La)$ preserves the $\Z_2$-grading.   Morphisms are  described in \cite[\S 2.1]{BKN1}. The category of $\La$-modules is not an abelian category. However
the category of  graded modules consisting of the same objects but with ${\mathbb Z}_2$-graded morphisms is
an abelian category. The parity change functor, $\Pi $, which interchanges the $\Z_2$-grading of a module, allows one to
make use of the standard tools of homological algebra. Since $U({\mathfrak g})$ is a Hopf algebra, one can use the antipode
and coproduct of $U(\La)$ to define a $\La$-module structure on the dual of a module  and the
tensor product of two modules.
All submodules are assumed to be graded.  \emph{Superdimension} of a $\fg$-module $M =M_{\0} \oplus M_{\1}$ is defined to be  $\dim M_{\0} - \dim M_{\1}$.

 A ${\mathfrak g}$-module $M$ is
\emph{finitely semisimple} if it is isomorphic to a direct sum of finite dimensional simple ${\mathfrak g}$ submodules.
Let ${\mathfrak t}$ be a Lie subsuperalgebra of ${\mathfrak g}$. Let $\mathcal{C}_{(\La, \t)}$ be the full subcategory of the category of all $\La$-modules which are finitely
semisimple as $\t$-modules. The category $\mathcal{C}_{(\La, \t)}$ is closed under arbitrary direct sums,
quotients, and finite tensor products (cf. \cite[3.1.6]{Kum}). For $M,N$ in $\mathcal{C}_{(\La, \t)}$,
$\Ext_{\mathcal{C}_{(\fg, \t)}}^d(M,N)$ will denote  the degree $d$ extensions between $N$ and $M$ in $\mathcal{C}_{(\fg, \t)}$.

 For background  on Lie superalgebras and relative cohomology see \cite{Kac} and
 \cite[\S2]{BKN1}, respectively.

 \subsection{} Let $\fg$ be a Lie superalgebra and $\ft$ be a Lie subsuperalgebra of $\fg$.  
 In Section 2   we review the relative cohomology for Lie superalgebras and record
the properties we are going to need in the rest. After that we give necessary and sufficient  conditions for  identify  the relative cohomology  $\HH^\bullet(\La,\t;\C)$ with the cochains and  discuss when the ring of invariants will be finitely generated. We also introduce the notion of a detecting subalgebra  for a Lie superalgebra.

In section $3$ under the assumption of the finite generation of $\HH^\bullet(\La,\t;\C)$ we define support varieties.  the rned with  varieties for Lie superalgebras. Here we associate varieties which are called support varieties to modules for Lie superalgebras. These varieties are affine conical varieties. We prove a theorem which is called realization theorem in the theory of support varieties.
This theorem basically says that we can realize any conical subvariety as variety of some module.

In Section $3$ we give an application. Here using   the work  done for the Cartan type Lie superalgebra $W(n)$ in \cite{BAKN}, we compute the relative cohomology ring for the Lie superalgebra $ \bar{S}(n)$ relative to the degree zero component $ \bar{S}(n)_0$ which is isomorphic to $\gl(n)$ as a Lie algebra. In particular we show that the cohomology ring  is a polynomial ring and $\bar{S}(n)$ admits a detecting subalgebra. By using finite generation theorem we defined support varieties for  $\fg$-modules. By using the fact that a simple module of $W(n)$ is typical if and only if its superdmension is zero and results of Serganova on representations of $\fg$,
we were able to compute support varieties of all finite dimensional simple  modules  which are completely reducible over  $\fg_0$. We also show that the realization theorem proven in Section $1$ holds for $\fg$.

\section{Cohomology}

\subsection{\bf Relative cohomology for Lie superalgebras.} Let us recall the definition of relative cohomoloy
for Lie superalgebras.
Let $\La$ be a Lie superalgebra and let $\t\subseteq \La$ be a Lie subsuperalgebra.
 Let $M$ be a $\La$-module. The cochain complex $C^{\bullet}(\fg, M)$ is defined by
 $$C^{\bullet}(\fg, M) = \bigoplus_{p \   \in \ \Z_{\geq 0}}C^p(\fg, M) \ \ \text{with} \ \  C^p(\La, M)=\Hom(\Lambda _s^p(\La), M). $$
 Here $\Lambda _s^p(\La)$ denotes the super wedge product, i.e,  the space of $\Z_2$-graded $p-$ alternating tensors on $\fg$.   $(\Lambda _s^p(\La))_{r}$, $r \in \Z_2$, is spanned by
 $$x_1 \wedge x_2\wedge \cdots \wedge x_p  \ \ (x_j \ \in  \fg)$$
 satisfying $\Sigma_{j=1}^p\overline{x_j} = r$ and

 $$x_1 \wedge x_2\wedge \cdots \wedge x_j \wedge x_{j+1} \wedge \cdots \wedge x_p= -(-1)^{\overline{x_j}  \ \overline {x_{j+1}}}x_1 \wedge x_2\wedge \cdots \wedge x_{j+1} \wedge x_{j} \wedge \cdots \wedge x_p.$$
 Thus $x_j, x_{j+1}$ skew commute unless both are odd in which case they commute.

The differential (or coboundray operator) $d$ is the even linear map of $\Z$ degree one  satisfying
{\setlength\arraycolsep{2pt}\begin{eqnarray}\label{eq:cohom}d(\phi )(x_1\wedge  ... \wedge x_{p+1})&=&\sum_{1\leq i<j \leq p+1}(-1)^
{i+j+(\overline{x_i}+\overline{x_j})k_i+\overline{x_j}k_{i,j}}\phi ([x_i, x_j] \ \wedge x_1\wedge \  ...\  \wedge \hat {x_i}\wedge  ... \wedge
\hat {x_j}\wedge ...\wedge x_{p+1}) \nonumber \\
& & \quad +\sum _i(-1)^{j+1+\overline{x_j}(k_j+\overline{\phi})}x_i\phi (x_1\wedge  ... \wedge \hat {x_i}\wedge  ... \wedge x_{p+1}),
\end{eqnarray}

where $x_1,...,x_{p+1}$ and $\phi$ are assumed to be homogeneous, and
$k_i:=\Sigma_{j=1}^{i-1}\overline{x_j}$,  $k_{i,j}:= \Sigma_{s=i+1}^{j-1}\overline{x_s}.$

We denote the restriction of $d$ to $C^p(\fg, M)$ by $d^p$. Since $d^2 =0$, i.e., $d^p\circ d^{p-1}=0$ for all $p\in \Z_{\geq 0}$,   \ ($C^\bullet(\fg, M), d$) is a chain complex.

Then we define

$$\HH^p(\La; M)=\Ker \  d^p/Im \  d^{p-1}.$$

The relative version of the above construction is defined as follows:  Let $\La, \ \t$, and $M$ be as above.
Define

$$C^p(\La, \t;M)=\Hom_\t(\Lambda _s^p(\La/\t),M).$$

Then the map $d^p$ gives a map $d^p:C^p(\La,\t;M) \rightarrow C^{p+1}(\La,\t;M)$ and we define
$$\HH^p(\La,\t;M)=\Ker \ d^p/Im \ d^{p-1}.$$

 For $U(\fg)$ modules $M, N$ we can  define cohomology for  the pair $(U(\fg),U({\mathfrak t}))$  which we denote by $\Ext_{(U(\La),U(\t))}^{\bullet}(M,N)$ (cf.\ \cite[Section 2.2]{BKN1}). If $\fg$ is finitely semisimple as a $\ft$
module under the adjoint action then for $M, N \in \mathcal{C}_{(\fg, \ft)}$ we have the following important isomorphisms
$$\Ext_{\mathcal {C}_{(\fg, \ft)}}^{\bullet}(M,N)\cong \Ext_{(U(\La),U(\t))}^{\bullet}(M,N) \cong  \Ext_{(U(\La),U(\t))}^\bullet (\C,M^\ast \otimes N)\cong \HH^\bullet (\La,\t;M^\ast \otimes N).$$

\subsection{Relating cohomology rings to invariants}

In this subsection we give necessary and sufficeint  conditions to identify the relative cohomology ring $\HH^{\bullet}(\fg, \t; \C)$ with the cochains and discuss finite generation of the relative cohomology rings.

\begin{prop}\label{P:cohomologyring} Let $\t$ be a Lie subsuperalgebra of $\fg$. Then $\HH^{\bullet}(\fg, \t; \C)$ is isomorphic to the ring of invariants $ \Lambda_s^{\bullet}(\ft^\ast)^{\t}$ if
and only if $[\fg, \fg]\subseteq \ft$.
\end{prop}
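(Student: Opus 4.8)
The plan is to read $\HH^{\bullet}(\fg,\t;\C)$ off the relative cochain complex directly. By definition $C^{\bullet}(\fg,\t;\C)=\Hom_{\t}\!\bigl(\Lambda_{s}^{\bullet}(\fg/\t),\C\bigr)$, and under the super wedge product this is exactly the ring $\Lambda_{s}^{\bullet}\!\bigl((\fg/\t)^{\ast}\bigr)^{\t}$, with the cup product on $\HH^{\bullet}(\fg,\t;\C)$ being the one it induces. Because the coefficient module is the trivial module $\C$, the whole second sum in \eqref{eq:cohom} vanishes (each $x_{i}$ acts as $0$ on $\C$), and for a $p$-cochain $\phi$ one is left with
\[
(d\phi)(x_{1}\wedge\dots\wedge x_{p+1})=\sum_{1\le i<j\le p+1}(-1)^{c_{i,j}}\,\phi\bigl(\pi([x_{i},x_{j}])\wedge x_{1}\wedge\dots\wedge\widehat{x_{i}}\wedge\dots\wedge\widehat{x_{j}}\wedge\dots\wedge x_{p+1}\bigr),
\]
where $\pi\colon\fg\to\fg/\t$ is the projection and $c_{i,j}$ is the exponent occurring in \eqref{eq:cohom}. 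Now $\HH^{\bullet}=C^{\bullet}$ precisely when $d\equiv 0$ on $C^{\bullet}(\fg,\t;\C)$; and otherwise, since $d^{p}\ne 0$ for some $p$ forces $\dim\HH^{p+1}<\dim C^{p+1}$ (each $C^{q}$ being finite dimensional), no graded ring isomorphism $\HH^{\bullet}\cong C^{\bullet}$ can exist. So the statement reduces to: $d$ vanishes identically on $C^{\bullet}(\fg,\t;\C)$ if and only if $[\fg,\fg]\subseteq\t$.

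For the implication $[\fg,\fg]\subseteq\t\Rightarrow d\equiv 0$: if $[\fg,\fg]\subseteq\t$ then $[x_{i},x_{j}]\in\t$ for all $x_{i},x_{j}\in\fg$, so $\pi([x_{i},x_{j}])=0$ and every term in the displayed formula dies; hence $d\phi=0$ for all $\phi$, so $\HH^{\bullet}(\fg,\t;\C)=C^{\bullet}(\fg,\t;\C)=\Lambda_{s}^{\bullet}\!\bigl((\fg/\t)^{\ast}\bigr)^{\t}$ as graded rings. (Note in passing that $[\fg,\fg]\subseteq\t$ forces $[\t,\fg]\subseteq\t$, so $\t$ acts trivially on $\fg/\t$ and the invariance condition on cochains is then vacuous.)

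The converse is the substantive half, handled by contraposition: assume $[\fg,\fg]\not\subseteq\t$ and produce some $\phi$ with $d\phi\ne 0$. The natural attempt is in degree $1$: pick $x,y\in\fg$ with $\pi([x,y])\ne 0$ and look for a $\t$-invariant functional $\phi$ on $\fg/\t$ with $\phi(\pi([x,y]))\ne 0$, for then $(d\phi)(x\wedge y)=\pm\phi(\pi([x,y]))\ne 0$. The existence of such a cochain is the main obstacle: $\pi([\fg,\fg])$ is a nonzero $\t$-submodule of $\fg/\t$, and one needs it to pair nontrivially with $\Hom_{\t}(\fg/\t,\C)$; this is where the running hypothesis that $\fg$ is finitely semisimple as a $\t$-module must be invoked, to split $\fg=\t\oplus\mathfrak{m}$ as $\t$-modules and to locate trivial $\t$-constituents. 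If $\pi([\fg,\fg])$ has no trivial constituent the degree-$1$ test fails, and one instead constructs a higher-degree cochain from $\mathfrak{m}$ together with a string of elements whose iterated brackets leave $\t$, applying \eqref{eq:cohom} in degree $\ge 2$. Pushing this detection argument through cleanly is the step I expect to absorb essentially all of the effort; everything else is the bookkeeping indicated above.
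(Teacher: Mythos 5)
Your reduction of the statement to the identical vanishing of $d$ on $C^{\bullet}(\fg,\t;\C)$ (via the finite-dimensionality/subquotient count) and your proof of the implication $[\fg,\fg]\subseteq\ft\Rightarrow d\equiv 0$ coincide with what the paper does. The gap is exactly where you located it: the converse is never proved. You correctly observe that the degree-one test needs a \emph{$\t$-invariant} functional on $\fg/\ft$ that is nonzero on the image of $[\fg,\fg]$, and such a functional exists only when $[\fg,\fg]\not\subseteq[\ft,\fg]+\ft$; your fallback to higher-degree cochains is left as a sketch. This gap cannot be closed, because the ``only if'' direction is false as stated. Take $\fg=\sl(2)$ (a Lie superalgebra concentrated in degree $\0$) and $\ft=\fh$ a Cartan subalgebra: then $\fg/\fh$ has weights $\pm2$, so $C^{1}(\fg,\fh;\C)=\Hom_{\fh}(\fg/\fh,\C)=0$, while $C^{0}\cong C^{2}\cong\C$ and $C^{p}=0$ for $p\geq 3$; every differential vanishes, so $\HH^{\bullet}(\fg,\fh;\C)=C^{\bullet}(\fg,\fh;\C)\cong\Lambda^{\bullet}\bigl((\fg/\fh)^{\ast}\bigr)^{\fh}$ as graded rings, yet $[\fg,\fg]=\fg\not\subseteq\fh$. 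The paper's own Example following Corollary \ref{T:Gfinitegeneration} exhibits the same failure: for $\ft=\fg_{\0}$ one always has $\HH^{\bullet}(\fg,\fg_{\0};\C)\cong\Lambda_s^{\bullet}((\fg/\fg_{\0})^{\ast})^{\fg_{\0}}$, although $[\fg,\fg]\subseteq\fg_{\0}$ fails whenever $[\fg_{\0},\fg_{\1}]\neq 0$ (e.g.\ $\gl(m|n)$).

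For comparison, the paper's proof of this direction is the single assertion that one can always construct a linear functional $\phi:\fg/\ft\to\C$ not killed by $d$; this founders on precisely the point you flagged, since the functional must be $\t$-invariant and none need exist. The structural reason is that a relative cochain only sees brackets of representatives of $\fg/\ft$, so the condition governing $d\equiv 0$ concerns $[\mathfrak{m},\mathfrak{m}]$ for a $\t$-module complement $\mathfrak{m}$ of $\ft$ (for $\ft=\fg_{\0}$ this is $[\fg_{\1},\fg_{\1}]\subseteq\fg_{\0}$, which is automatic), not $[\fg,\fg]$. So: your ``if'' direction is correct and matches the paper; your ``only if'' direction is incomplete, and no argument along the proposed lines can complete it without strengthening the hypotheses (for instance requiring $\ft$ to be an ideal, which makes the degree-one detection work, but which excludes the cases the paper actually cares about).
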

\begin{proof}

Recall that the cochains are defined by
\begin{equation*} \label{E:1}
C^p(\fg, \t; \C) =  \Hom_{\t}(\Lambda_s^p(\fg /\t), \C)\cong \Lambda_{s}^{p} \left(\ft^{*}  \right)^{\t}.
\end{equation*}

If $\HH^{\bullet}(\fg, \t; \C)\cong \Lambda_s^{\bullet}(\ft^\ast)^{\t}$, then since cohomology  is a subquotient of cochains all differentaials has to be zero. If $[\fg, \fg]$ is not contained in $\ft$ one can always construct a linear functional $\phi : \fg / \ft \rightarrow \C$ which does not go to zero under the differential $d$.

Suppose conversely that $[\fg, \fg]\subseteq \ft$. Now note that in this case the differential $d^p$ in \eqref{eq:cohom} is identically zero. The  second sum of \eqref{eq:cohom} is zero since here $M = \C$ and since
each $[x_i, x_j]$ is zero in the quotient $\fg/ \t$ the first sum of \eqref{eq:cohom}is zero as well.
Therefore
$$\HH^{\bullet}(\fg, \t; \C)\cong \Lambda_s^{\bullet}((\fg/ \ft)^\ast)^{\t}.$$

\end{proof}

\begin{cor}\label{T:Gfinitegeneration} 
Assume that $\HH^{\bullet}(\fg, \t; \C)\cong \Lambda_s^{\bullet}((\fg/ \ft)^\ast)^{\t}$ and  assume  in addition that $\t$ is a reductive Lie algebra. Let $G$ be  the connected reductive algebraic group such that $\Lie(G)=\t$. Let $M$ be a finite dimensional $\fg$-module. Then,
\begin{itemize}
 \item[(a)] The superalgebra $\HH^{\bullet}(\fg, \t; \C)$ is finitely generated as a ring.
\item[(b)] $ \HH^{\bullet}(\fg, \t; M)$ is finitely generated as an $\HH^{\bullet}(\fg, \t; \C)$-module.
\end{itemize}
\end{cor}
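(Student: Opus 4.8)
The plan is to deduce both parts from classical invariant theory for reductive groups, using the identification of the cohomology ring with a ring of invariants provided by Proposition~\ref{P:cohomologyring} (or rather its hypothesis, which we are assuming directly here). First I would observe that since $\t$ is assumed to be a reductive Lie algebra, $\t$ is itself an algebraic Lie algebra and there is a connected reductive algebraic group $G$ with $\Lie(G)=\t$; moreover $\t$ acts on $\fg/\t$ via the adjoint action, and this action integrates to a rational action of $G$ on the finite dimensional vector space $\fg/\t$, hence to a rational action on the symmetric/exterior algebra $\Lambda_s^{\bullet}((\fg/\t)^{\ast})$. Because $\Lambda_s^{\bullet}((\fg/\t)^{\ast})$ is a finitely generated commutative $\C$-algebra (it is a tensor product of a polynomial algebra on the even part with an exterior algebra on the odd part of $(\fg/\t)^{\ast}$, both finite dimensional), part~(a) follows immediately from the Hilbert--Nagata finiteness theorem: the ring of $G$-invariants (equivalently $\t$-invariants, since $G$ is connected) of a finitely generated commutative algebra on which a reductive group acts rationally is again finitely generated. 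One small point to address is the $\Z_2$-grading: finite generation as a superalgebra is the same as finite generation as an ordinary algebra here, so no extra work is needed.

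For part~(b), I would first reduce $\HH^{\bullet}(\fg,\t;M)$ to an invariant-theoretic object as well. The cochain complex computing $\HH^{\bullet}(\fg,\t;M)$ is $C^p(\fg,\t;M)=\Hom_{\t}(\Lambda_s^p(\fg/\t),M)\cong\big(\Lambda_s^p((\fg/\t)^{\ast})\otimes M\big)^{\t}$, and since $\t$ is reductive, taking $\t$-invariants is exact; hence $\HH^{\bullet}(\fg,\t;M)$ is a subquotient, as a module over $\HH^{\bullet}(\fg,\t;\C)$, of the invariant ring $\big(\Lambda_s^{\bullet}((\fg/\t)^{\ast})\otimes M\big)^{\t}$. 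The latter is a module over $R:=\Lambda_s^{\bullet}((\fg/\t)^{\ast})^{G}$, and by the standard strengthening of Hilbert--Nagata for reductive group actions, if $S$ is a finitely generated commutative $G$-algebra and $N$ a finitely generated $S$-module with compatible rational $G$-action, then $N^{G}$ is a finitely generated $S^{G}$-module. Applying this with $S=\Lambda_s^{\bullet}((\fg/\t)^{\ast})$ and $N=S\otimes M$ (which is finitely generated over $S$ because $M$ is finite dimensional), we get that $\big(\Lambda_s^{\bullet}((\fg/\t)^{\ast})\otimes M\big)^{\t}$ is a finitely generated module over $R=\HH^{\bullet}(\fg,\t;\C)$. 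Since $\HH^{\bullet}(\fg,\t;\C)$ is Noetherian by part~(a), any subquotient of a finitely generated module over it is again finitely generated, and this yields~(b).

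The step I expect to be the main obstacle, or at least the point requiring the most care, is ensuring that the $\Z_2$-grading and the super exterior algebra structure do not obstruct the application of the commutative invariant-theory results: $\Lambda_s^{\bullet}((\fg/\t)^{\ast})$ is not commutative in the naive sense but is commutative in the graded (supercommutative) sense, and one must check that the relevant finiteness theorems apply. This is handled by noting that $\Lambda_s^{\bullet}((\fg/\t)^{\ast})\cong S^{\bullet}(((\fg/\t)_{\1})^{\ast})\otimes\Lambda^{\bullet}(((\fg/\t)_{\0})^{\ast})$ as algebras with $G$-action (the roles of even and odd being swapped by the super-wedge convention recalled in the text), so the whole thing is module-finite over the genuinely commutative Noetherian subalgebra $S^{\bullet}(((\fg/\t)_{\1})^{\ast})$, and a $G$-equivariant version of the Artin--Tate lemma together with Hilbert--Nagata then gives finite generation of the invariants. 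A secondary point is verifying that the $G$-action on $M$ and on $\fg/\t$ is indeed rational: for $M$ this is automatic since $M$ is finite dimensional and $G$ is an algebraic group whose Lie algebra acts, and for $\fg/\t$ it is the adjoint action, which is algebraic. Once these technical compatibilities are in place, both~(a) and~(b) are immediate consequences of reductive invariant theory.
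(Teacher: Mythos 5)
Your proof is correct and follows essentially the same route as the paper: identify the cohomology ring with $\Lambda_s^{\bullet}((\fg/\t)^{\ast})^G$ and apply Hilbert--Nagata for (a), then for (b) observe that $C^{\bullet}(\fg,\t;M)\cong(\Lambda_s^{\bullet}((\fg/\t)^{\ast})\otimes M)^G$ is finitely generated over the invariant ring by the module version of the finiteness theorem (the paper cites Popov--Vinberg, Theorem~3.25, for exactly this), and finally pass from cochains to cohomology via Noetherianness. The paper compresses that last step into a citation of \cite[Theorem~2.5.3]{BKN1} and does not spell out the supercommutativity issue; your observation that $\Lambda_s^{\bullet}((\fg/\t)^{\ast})$ is module-finite over the honest polynomial subalgebra $S^{\bullet}(((\fg/\t)_{\1})^{\ast})$ is a harmless but useful elaboration of a point the paper takes for granted.
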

\begin{proof}\begin{itemize}

\item [(a)] Since $\HH^{\bullet}(\fg, \t; \C) \cong \Lambda_s^{\bullet}((\fg/ \ft)^\ast)^{\t} =\Lambda_s^{\bullet}((\fg/ \ft)^\ast)^G$ and $G$ is reductive this statement follows from  the classical invariant theory
result of Hilbert \cite[Theorem 3.6]{PV}.

\item[(b)] Since $M$ is finite dimensional
\begin{align*}
\Hom_{\C}(\Lambda_s^{\bullet}(\fg/\t),M)&\cong \Lambda_s^{\bullet} ((\fg/\t)^{*})\otimes M \\
\end{align*}
is finitely generated as a $\Lambda_s^
{\bullet}((\fg/ \t)^\ast)$-module and by
  \cite[Theorem 3.25]{PV}
\[
\Hom_\C(\Lambda_s^{\bullet}(\fg/\t),M)^{G}=\Hom_{G}(\Lambda_s^{\bullet}(\fg/\t),M) = C^{\bullet}(\fg ,\t;M)
\] is finitely generated as a $\Lambda_{s}^{\bullet}((\fg / \t)^{*})^{G}\cong \HH^\bullet(\fg,\t;\C)$-module. Now we can argue as in \cite[Theorem 2.5.3]{BKN1} to deduce that $\HH^{\bullet}(\fg, \t; M)$ is finitely generated as a $\HH^{\bullet}(\fg, \t; \C)$-module.

\end{itemize}
\end{proof}
\begin{ex}

Suppose that $\t=\fg_{\0}$. Since  super wedge product is symmetric product on odd spaces and $[\fg_{\1}, \fg_{\1}]\subseteq \fg_{\0}$ we have the following important isomorphism
$$\HH^{\bullet}(\fg, \fg_{\0}; \C)\cong \Lambda_s^{\bullet}((\fg/ \fg_{\0})^\ast)^{\fg_{\0}} \cong  S(\fg_{\1}^\ast)^{\fg_{\0}}.$$

Recall that a Lie superalgebra $\fg = \fg_{\0} \oplus \fg_{\1}$ is called classical if there is a connected reductive algebraic group $G_{\0}$ such that $\Lie(G_{\0}) = \fg_{\0}$ and an action of $G_{\0}$ on $\fg_{\1}$ which differentiates to the adjoint action of $\fg_{\0}$  on $\fg_{\1}$.
Note that if $\fg$ is a classical Lie superalgebra then $\HH^{\bullet}(\fg, \fg_{\0}; \C)$ is always finitely generated.

\end{ex}

\begin{asum} \label{A1} Throughout the rest of this section and next section we fix a pair $(\fg, \ft)$ and assume that  $\HH^{\bullet}(\fg, \t; \C)$ is  finitely generated.
\end{asum}

\subsection{Detecting subalgebras}

In \cite{BKN1, BKN2} by applying invariant theory
results in \cite{LR} and \cite{dadokkac} the authors showed that under suitable conditions  a classical Lie superalgebra $\fg = \fg_{\0} \oplus \fg_{\1}$ admits a subalgebra
$\fe=\fe_{\0}\oplus \fe_{\bar{1}}$
 such that the restriction map in cohomology induces an isomorphism
\[
\text{H}^{\bullet}(\fg,\fg_{\0};{\mathbb C})
\cong \text{H}^{\bullet}(\fe,\fe_{\0};{\mathbb C})^{W},
\]
where $W$ is a finite pseudoreflection group.

Similar subalgebras were constructed for the Cartan type Lie superalgebra $W(n)$ in \cite{BAKN}. The goal of all this work is to construct subalgebras
that can play the role  of elementary abelian groups in the theory of support varieties for finite groups.

\begin{defn}
Let $\fe=\fe_{\0}\oplus \fe_{\1}$ be a  subalgebra of $\fg$ such that
\begin{itemize}
\item [(a)] $\fe$ is a classical Lie superalgebra, and
 \item [(b)] the inclusion map $\fe \hookrightarrow \fg$ induces an isomorphism
$$\HH^{\bullet}(\fg, \t; \C) \cong \HH^{\bullet}(\fe, \fe_{\0}; \C)^W$$
for some group $W$ such that $(-)^W$ is exact.

\end{itemize}
A subalgebra with these properties will be called a  \emph{detecting subalgebra} for the pair $(\fg, \t)$.
\end{defn}

\begin{rem} Observe that in the definition above we are implicitly assuming that $\fe_{\0} \subseteq \ft_{\0}$.
\end{rem}

\begin{ex} Let $\fg=\gl(m|n)  $ be
the Lie superalgebra of $(m+n) \times (m+n)$ complex matrices with $\Z_2$-grading given by
$$\overline{E_{i,j}} = \0  \ \ \text{if} \  1\leq i, j\leq m \  \text{or} \   m+1\leq i, j\leq m+n \ \ \text{and} \
\overline{E_{i,j}} = \1 \ \ \text{otherwise},$$
where $E_{i,j}$  denotes the $(i,j)$ matrix unit. Let  $\t= \fg_{\0}$. As in \cite[Section 8.10]{BKN1}, one can take
$\fe_{\1}\subseteq \fg_{\1}$ to be the subspace spanned by

$$ E_{m+1-s, m+s}+E_{m+s, m+1-s} \ \ \text{for} \ \ s=1, \dots, r.$$
Let $\fe_{\0}$ be the stabilizer of $\fe_{\1}$ in $\fg_{\0}$. Then $\fe=\fe_{\0}\oplus \fe_{\1}$ is a detecting subalgebra for the pair $(\fg, \ft)$.
\end{ex}


Since a pair $(\fg, \t)$ may not have a detecting subalgebra we make the following technical assumption about our fixed pair $(\fg, \t)$.
\begin{asum}\label{A2}Throughout the next section we assume that our fixed  pair $(\fg, \t)$ has a detecting subalgebra $\fe = \fe_{\0} \oplus \fe_{\1}$.
\end{asum}

\section{Support Varieties}

Recall that throughout this section we assume that $\HH^{\bullet}(\fg, \ft; \C)$ is finitely generated.

 \subsection{} Let $M$ and $N$ be $\fg$-modules such that $ \HH^\bullet (\fg, \ft; \Hom_{\C}(M, N)))$ is
finitely generated as an $\HH^\bullet (\fg,\ft;\C )$-module.

 Let $$I_{(\fg, \ft)}(M, N)= \Ann_{\HH^{\bullet} (\fg,\ft;\C )}(\HH^\bullet (\fg , \ft; \Hom_{\C}(M, N)))$$

be the annihilator ideal of this module. We define the   \emph{relative support variety} of the pair
$(M,N)$ to be
\[
\V_{(\fg , \ft)}(M, N)= \MaxSpec(\HH^{\bullet} (\fg,\ft;\C )/I_{(\fg, \ft)}(M, N)),
\]
the maximal ideal spectrum of the quotient of $\HH^{\bullet} (\fg,\ft;\C )$ by $I_{(\fg, \ft)}(M, N)$.
For short  when $M=N$, write

$$I_{(\fg, \ft)}(M)=I_{(\fg, \ft)}(M, M),$$

$$\V_{(\fg, \ft)}(M)=\V_{(\fg, \ft)}(M,M).$$
We call $\V_{(\fg, \ft)}(M)$ the \emph{support variety} of $M$.


\subsection {}As the detecting subalgebra $\fe = \fe_{\0} \oplus \fe_{\1}$ is classical $\HH^{\bullet}(\fe, \fe_{\0}; \C) \cong S(\fe_{\1}^\ast)^{\fe_{\0}}$ is finitely generated. Therefore one can define support varieties for  $\fe$-modules as in $(3.1)$.

 The canonical restriction map $$\operatorname{res} :\HH^\bullet (\fg,\ft;\C )\rightarrow \HH^{\bullet}(\fe, \fe_{\0}; \C)$$ induces a map of varieties $$\resstar :\V_{(\fe, \fe_{\0})}(\C) \rightarrow \V_{(\fg, \ft)}(\C).$$
By the isomorphism $\HH^{\bullet}(\fg, \ft; \C) \cong \HH^{\bullet}(\fe, \fe_{\0}; \C)^W$ one then has
$$  \V_{(\fe, \fe_{\0})}(\C)/W \cong \V_{(\fg, \ft)}(\C).$$
In particular for any finite dimensional $\fg$-module $M$ , where $ \HH^\bullet (\fg, \ft; \Hom_{\C}(M, M)))$ is finitely generated, $\V_{(\fe, \fe_{\0})}(M)/W$ and $\V_{(\fg, \t)}(M)$ can naturally be viewed as affine subvarieties of $\V_{(\fg, \ft)}(\C)$. 

Furthermore, $\resstar$ restricts to give a map,
$$ \V_{(\fe, \fe_{\0})}(M) \rightarrow \V_{(\fg, \ft)}(M).$$
Since $\V_{(\fe, \fe_{\0})}(M)$ is stable under the action of $W$ we have the following embedding induced by $\resstar$,
\begin{equation}\label{E:Nresstarimage}\V_{(\fe, \fe_{\0})}(M)/W \cong \resstar(\V_{(\fe, \fe_{\0})}(M)) \hookrightarrow \V_{(\fg, \t)}(M).
\end{equation}

\subsection{}
 For a homogeneous $x \in \fe$, let $<x>$ denote the Lie subsuperalgebra generated by $x$. If $M$ is a $\fe$-module then define the \emph{rank variety } of $M$ to be
$$\V_{\fe}^{\rank} = \{ x \in \fg_{\1} \mid M  \ \text{is not projective as a } U(<x>)-\text{module}\} \cup \{0\}$$

\begin{asum}\label{A3} We assume that the detecting subalgebra $\fe$ has a rank variety description; i.e., for any $\fe$-module $M$, $\V_{\fe}^{\rank}(M) \cong \V_{(\fe, \fe_{\0})}(M)$.
\end{asum}

By using the rank variety description   one can prove a number of properties of  $\fe$-support varieties. We record some of these properties and for the proofs and other properties we refer the reader to \cite[Section 6]{BKN1}.
\begin{lem}\label{L:super}Assume that the Hypotheses \ref{A1}, \ref{A2} and \ref{A3} hold for our fixed pair $(\fg, \ft)$. Let $M$ and $N$ be finite dimensional $\fe$-supermodules. Then
\begin{itemize}
\item[(1)]$\V_{(\fe,\fe_{\0})}(M \otimes N) = \V_{(\fe,\fe_{\0})}(M) \cap \V_{(\fe,\fe_{\0})}(N)$.
\item[(2)]$\V_{(\fe,\fe_{\0})}(M \oplus N)= \V_{(\fe,\fe_{\0})}(M) \cup \V_{(\fe,\fe_{\0})}(N)$.
\item[(3)] $M$ is projective if and only if $\V_{(\fe,\fe_{\0})}(M)=\{0\}$.
\item[(4)] If superdimension of $M$ is nonzero, i.e., $\dim M_{\0} \neq \dim M_{\1}$, then
$$\V_{(\fe,\fe_{\0})}(M)=\V_{(\fe,\fe_{\0})}(\C).$$
\end{itemize}

\end{lem}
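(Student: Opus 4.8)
The plan is to transfer everything through the rank variety isomorphism of Hypothesis~\ref{A3} and then invoke the corresponding facts for the algebras $U(\langle x\rangle)$, which are well understood since $x$ is either a single even or a single odd element. First I would reduce to rank varieties: by Hypothesis~\ref{A3} it suffices to prove the four statements with $\V_{(\fe,\fe_{\0})}$ replaced by $\V_{\fe}^{\rank}$, so each assertion becomes a statement about, for each $x\in\fe_{\1}$, whether a module is projective over $U(\langle x\rangle)$. For a homogeneous odd $x$ with $[x,x]=0$, the algebra $U(\langle x\rangle)$ is isomorphic to $\C[y]/(y^2)$ (an exterior algebra on one generator), so projectivity is the same as freeness, and the usual rank-variety calculus for such a local algebra applies.

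Then I would handle the four items in order. For (1), tensor with a fixed $U(\langle x\rangle)$-module and use that over $\C[y]/(y^2)$ a tensor product $M\otimes N$ is free if and only if $M$ or $N$ is free (this is the module-theoretic core, coming from the fact that $M$ is free iff the operator $y$ has maximal rank on $M$, together with the behaviour of ranks of $y\otimes 1 + 1\otimes y$ under the coproduct); running over all $x$ gives the intersection formula. For (2), freeness of $M\oplus N$ over $U(\langle x\rangle)$ is equivalent to freeness of both summands, giving the union. For (3), $M$ is projective as a $\fe$-module iff it is projective as a $U(\langle x\rangle)$-module for all $x$ — one direction is restriction, the other uses that $\fe$ is classical so projectivity is detected on the rank variety, which here we are assuming equals $\V_{(\fe,\fe_{\0})}(M)$ — so $\V_{\fe}^{\rank}(M)=\{0\}$ exactly when $M$ is projective. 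For (4), I would observe that $\sdim$ is additive on short exact sequences and multiplicative on tensor products, and that a free $U(\langle x\rangle)$-module has superdimension zero; hence if $\sdim M\neq 0$ then $M$ cannot be free over any $U(\langle x\rangle)$ with $x\neq 0$, so every such $x$ lies in $\V_{\fe}^{\rank}(M)$, forcing $\V_{(\fe,\fe_{\0})}(M)=\fe_{\1}=\V_{(\fe,\fe_{\0})}(\C)$ (the variety of the trivial module being all of $\fe_{\1}$ because $\C$ is never free over $U(\langle x\rangle)$ for $x\neq 0$).

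The main obstacle I expect is the tensor-product statement (1), specifically the claim that freeness over $U(\langle x\rangle)\cong\C[y]/(y^2)$ is preserved and reflected correctly under the comultiplication when forming $M\otimes N$ as a $\fe$-module; one must check that the restriction of $M\otimes N$ to $U(\langle x\rangle)$ agrees, up to the parity subtleties of the super tensor product, with the tensor product of the restrictions, and then that the nilpotent operator governing freeness behaves as $y\otimes 1 \pm 1\otimes y$. Everything else is either a restriction argument or a bookkeeping computation with superdimensions. Since all of these are exactly the arguments carried out in \cite[Section~6]{BKN1} for classical Lie superalgebras, I would present the proof as an application of those results to $\fe$ and refer the reader there for the detailed verifications, only spelling out the points where the pair $(\fg,\ft)$ rather than $(\fe,\fe_{\0})$ enters.
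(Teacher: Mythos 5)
Your proposal is correct and follows essentially the same route as the paper, which for this lemma simply defers to the rank-variety arguments of \cite[Section 6]{BKN1}: reduce via Hypothesis \ref{A3} to rank varieties, treat $U(\langle x\rangle)$ as $\C[y]/(y^2)$, and check the four properties there. Your sketch of the individual items (tensor, direct sum, projectivity, superdimension) is a faithful reconstruction of what that reference actually proves, so nothing further is needed.
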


\subsection{}

Let $0 \neq \zeta \in \HH^{n}(\fg, \ft; \C)$. Since $\HH^{n}(\fg, \ft; \C) \cong \Hom_{\fg}(\Omega^n(\C), \C)$ , where $\Omega^n(\C)$ denotes the $n$th syzygy,
$\zeta$ corresponds to a surjective map $\hat{\zeta}:\Omega^n(\C) \rightarrow \C.$
We  set
$$ L_{\zeta} =  \Ker (\hat{\zeta}:\Omega^n(\C)\rightarrow \C )\subseteq \Omega^n(\C).$$
The modules $ L_{\zeta}$  are often called $\lq \lq$Carlson modules". The importance of the modules $L_{\zeta}$ is that their support variety can be explicitly computed.

\begin{lem}\label{L:esupportzeta} Assume that the Hypotheses \ref{A1}, \ref{A2} and \ref{A3} hold for the pair $(\fg, \ft)$. Let $\zeta \in \HH^{n}(\fg, \ft; \C)$ and $L_{\zeta}$ be as above, then
$$V_{(\fe ,\fe_{\0})}(L_{\zeta}) = \V_{(\fe ,\fe_{\0})}(L_{\res(\zeta)}) = Z(\res(\zeta)).$$
\end{lem}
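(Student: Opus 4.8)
The plan is to compute the $\fe$-support variety of $L_\zeta$ by restricting everything to the detecting subalgebra $\fe$ and then appealing to the known structure theory for $\fe$-modules via Hypothesis~\ref{A3}. First I would observe that the class $\res(\zeta) \in \HH^n(\fe,\fe_{\0};\C)$ is exactly the image of $\zeta$ under the restriction homomorphism, and that the Carlson module construction is compatible with restriction: if $\hat\zeta\colon \Omega^n(\C)\to \C$ represents $\zeta$ over $\fg$, then restricting to $\fe$ and splitting off projective summands of $\Omega^n_{\fg}(\C)$ yields $\Omega^n_{\fe}(\C)$ up to projectives, so that $L_\zeta$ restricted to $\fe$ agrees with $L_{\res(\zeta)}$ up to projective $\fe$-summands. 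Since projective summands contribute nothing to the support variety by Lemma~\ref{L:super}(3) and (2), this gives the first equality $\V_{(\fe,\fe_{\0})}(L_\zeta) = \V_{(\fe,\fe_{\0})}(L_{\res(\zeta)})$.

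For the second equality I would work entirely inside $\fe$. By the rank variety description (Hypothesis~\ref{A3}) together with the standard properties recorded in Lemma~\ref{L:super}, one has for any $0\neq \eta \in \HH^n(\fe,\fe_{\0};\C)$ the short exact sequence $0 \to L_\eta \to \Omega^n(\C) \to \C \to 0$, which upon applying $\HH^\bullet(\fe,\fe_{\0};-)$ and tracing through the connecting map shows that $\eta$ annihilates $\HH^\bullet(\fe,\fe_{\0};L_\eta)$ — hence $\V_{(\fe,\fe_{\0})}(L_\eta) \subseteq Z(\eta)$, the zero locus of $\eta$ in $\V_{(\fe,\fe_{\0})}(\C)$. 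The reverse inclusion is the content of the realization-type argument: using the long exact sequence in cohomology and the fact that $\V_{(\fe,\fe_{\0})}(\Omega^n(\C)) = \V_{(\fe,\fe_{\0})}(\C)$, together with Lemma~\ref{L:super}(1) applied to tensor products, one gets $\V_{(\fe,\fe_{\0})}(L_\eta) \supseteq Z(\eta)$; this is precisely the classical Carlson-module computation carried over to the Lie superalgebra setting, and the details are in \cite[Section 6]{BKN1}. Applying this with $\eta = \res(\zeta)$ (which is nonzero whenever $\zeta$ restricts nontrivially; the case $\res(\zeta) = 0$ is handled separately, where $Z(\res(\zeta))$ is all of $\V_{(\fe,\fe_{\0})}(\C)$ and $L_{\res(\zeta)}$ is up to projectives $\Omega^n(\C)\oplus\Pi^?\C$, again with the claimed variety) yields $\V_{(\fe,\fe_{\0})}(L_{\res(\zeta)}) = Z(\res(\zeta))$.

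The main obstacle I anticipate is the compatibility statement in the first paragraph, namely that restricting the $\fg$-module $L_\zeta$ to $\fe$ reproduces $L_{\res(\zeta)}$ up to projectives. This requires knowing that $\Omega^n_{\fg}(\C)$ restricted to $\fe$ is $\Omega^n_{\fe}(\C)$ modulo projectives, which in turn needs that a projective resolution of $\C$ over the pair $(\fg,\ft)$ restricts to a complex over $(\fe,\fe_{\0})$ whose terms are projective (relatively), plus exactness — this is a relative-homological-algebra statement about the pair of pairs $(\fe,\fe_{\0}) \hookrightarrow (\fg,\ft)$ and uses that $\fe_{\0}\subseteq \ft_{\0}$ (the remark after the definition of detecting subalgebra) so that relatively projective objects restrict correctly. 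Once that is in place, everything else is a formal consequence of the long exact sequences and the properties in Lemma~\ref{L:super}, mirroring the group-cohomology proof; so I would spend most of the effort making the restriction/syzygy bookkeeping precise and otherwise cite \cite[Section 6]{BKN1} for the internal $\fe$-computation.
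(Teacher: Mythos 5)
Your proposal is correct and follows essentially the same route as the paper, whose proof is simply the citation ``argued as in [BKN1, Theorem 6.4.3]'': split $L_{\zeta}|_{\fe}$ as $L_{\res(\zeta)}$ plus a projective summand (the Benson, Section 5.9 argument) and then compute $\V_{(\fe,\fe_{\0})}(L_{\res(\zeta)}) = Z(\res(\zeta))$ internally via the rank-variety description of Hypothesis~\ref{A3}. The restriction/syzygy compatibility you flag as the main obstacle is precisely what the cited argument (and the paper's own later use of $L_{\zeta}\cong L_{\res(\zeta)}\oplus P$ in Lemma~\ref{L:gsupportzeta}) takes care of.
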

\begin{proof}This is argued as in \cite[Theorem 6.4.3]{BKN1}.
\end{proof}

\subsection{Realization Theorem}

An important property in the theory of support varieties is the realization of any homogeneous variety as the support variety of a module.
Realizability of support varieties was first proven in \cite[Theorem 6.7]{BKN1} for the detecting subalgebras  of classical Lie superalgebras. This result was later extended to $\fg$ support varieties for classical Lie superalgebras and Cartan type Lie superalgebra $W(n)$ in \cite[Theorem 8.8.1]{BAKN}.
\begin{prop}\label{P:relative} Assume that the Hypothesis \ref{A1} holds for our fixed pair $(\fg, \ft)$.
 Let $\zeta _1,\dots, \zeta _n \in \HH^{\bullet}(\fg, \t; \C)$ be
homogeneous elements with corresponding Carlson modules
$L_{\zeta_1}, \dots, L_{\zeta_n}$. Then
\begin{itemize}
\item[(1)] $\HH^{\bullet}(\fg, \t; L_{\zeta_1}^\ast\otimes
\dotsb  \otimes L_{\zeta_n}^\ast)$ is finitely generated over $\HH^{\bullet}(\fg, \t; \C)$.

\item[(2)] ${\mathcal V}_{(\fg, \t)}(L_{\zeta_1}\otimes
\dotsb \otimes L_{\zeta_n},{\mathbb C})\subseteq  {\mathcal V}_{(\fg, \t)}(L_{\zeta_1},\C) \cap \dots \cap {\mathcal V}_{(\fg, \t)}(L_{\zeta_n}, \C).$
\end{itemize}
\end{prop}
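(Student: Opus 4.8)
The strategy is to reduce both statements to the known cohomological behaviour of a single Carlson module and then proceed by induction on $n$. For part (1), I would first recall that for a single homogeneous $\zeta \in \HH^{\bullet}(\fg, \t; \C)$ the short exact sequence $0 \to L_{\zeta} \to \Omega^{n}(\C) \to \C \to 0$ shows, after dualizing and tensoring, that $\HH^{\bullet}(\fg, \t; L_{\zeta}^{\ast})$ sits in a long exact sequence whose outer terms are shifts of $\HH^{\bullet}(\fg, \t; \C)$; since that ring is finitely generated by Hypothesis \ref{A1}, and finitely generated modules over a Noetherian ring form a Serre subcategory, $\HH^{\bullet}(\fg, \t; L_{\zeta}^{\ast})$ is finitely generated over $\HH^{\bullet}(\fg, \t; \C)$. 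For the general case $L_{\zeta_1}^{\ast} \otimes \dotsb \otimes L_{\zeta_n}^{\ast}$, I would tensor the sequence for $\zeta_n$ with $L_{\zeta_1}^{\ast} \otimes \dotsb \otimes L_{\zeta_{n-1}}^{\ast}$ (exactness is preserved since we are tensoring over $\C$), obtaining a short exact sequence relating $\HH^{\bullet}$ of the $n$-fold tensor product to $\HH^{\bullet}$ of an $(n-1)$-fold tensor product (with $\Omega^{n}(\C)$ in place of $\C$ in one spot, which only shifts degrees). The induction hypothesis plus the Noetherian/Serre argument then closes the step.

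For part (2), the containment $\V_{(\fg,\t)}(L_{\zeta_1} \otimes \dotsb \otimes L_{\zeta_n}, \C) \subseteq \bigcap_i \V_{(\fg,\t)}(L_{\zeta_i}, \C)$ is equivalent to showing that the corresponding annihilator ideals satisfy the reverse inclusion: $I_{(\fg,\t)}(L_{\zeta_1} \otimes \dotsb \otimes L_{\zeta_n}, \C) \supseteq \sum_i I_{(\fg,\t)}(L_{\zeta_i},\C)$, or more precisely that any element of $\HH^{\bullet}(\fg,\t;\C)$ annihilating $\HH^{\bullet}(\fg, \t; \Hom_{\C}(L_{\zeta_i}, \C))$ for some $i$ also annihilates $\HH^{\bullet}(\fg, \t; \Hom_{\C}(L_{\zeta_1}\otimes\dotsb\otimes L_{\zeta_n}, \C))$. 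I would extract this from the same short exact sequences used in (1): the long exact sequence in cohomology associated to $0 \to L_{\zeta_i} \to \Omega^{n_i}(\C) \to \C \to 0$ (tensored appropriately) is a sequence of $\HH^{\bullet}(\fg,\t;\C)$-module maps, so multiplication by $\res$-compatible cohomology classes, and in particular by cup product with $\zeta_i$ itself, acts as a connecting map; this forces the support variety of the tensor product to lie in $Z(\zeta_i)$-type loci. Iterating over all $i$ gives the intersection. This mirrors the argument in \cite[Theorem 8.8.1]{BAKN} and \cite{BKN1}.

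\textbf{Main obstacle.} The delicate point is part (1): ensuring that the tensor product $L_{\zeta_1}^{\ast} \otimes \dotsb \otimes L_{\zeta_n}^{\ast}$ genuinely lies in a category where the relative cohomology $\HH^{\bullet}(\fg, \t; -)$ is defined and where finite generation over $\HH^{\bullet}(\fg,\t;\C)$ propagates through short exact sequences. Since the Carlson modules are built from syzygies $\Omega^{n}(\C)$, one must check that these, their duals, and their tensor products remain finite dimensional and admit the needed projective resolutions in the relevant module category (in the setting of Hypothesis \ref{A1}, without assuming $\t$ reductive as in Corollary \ref{T:Gfinitegeneration}); absent a clean general finite-generation statement like part (b) of that corollary, one works only with Hypothesis \ref{A1} and the structure of the Carlson sequences, so the finite-generation of each $\HH^{\bullet}(\fg, \t; L_{\zeta_i}^{\ast})$ must be established directly before the inductive tensoring can even be started. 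Once that base case is secured, the induction and the Noetherian subquotient argument are routine.
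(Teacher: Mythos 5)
The paper itself gives no argument for this proposition---it defers verbatim to \cite[Proposition 8.6.1]{BAKN}---and your sketch reconstructs the standard argument that reference uses: the defining short exact sequences of the Carlson modules, dualized and tensored, an induction on $n$ via the long exact sequence in relative cohomology, and the Noetherian property of the finitely generated ring $\HH^{\bullet}(\fg,\t;\C)$ to close each step of (1). One correction for (2): the variety containment is not \emph{equivalent} to the ideal containment $I_{(\fg,\t)}(L_{\zeta_1}\otimes\dotsb\otimes L_{\zeta_n},\C)\supseteq \sum_i I_{(\fg,\t)}(L_{\zeta_i},\C)$ (equivalence would require passing to radicals), and the on-the-nose inclusion $I_{(\fg,\t)}(L_{\zeta_i},\C)\subseteq I_{(\fg,\t)}(L_{\zeta_1}\otimes\dotsb\otimes L_{\zeta_n},\C)$ is not what the Carlson sequences actually deliver. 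What they give is, on one hand, that $\zeta_i^{2}$ annihilates $\Ext^{\bullet}_{\mathcal{C}_{(\fg,\t)}}(L_{\zeta_i}\otimes X,\C)$ for any $X$ (the Benson-style argument the paper also invokes in Lemma \ref{L:gsupportzeta}), so that $\V_{(\fg,\t)}(L_{\zeta_1}\otimes\dotsb\otimes L_{\zeta_n},\C)\subseteq Z(\zeta_i)$; and on the other hand, the long exact sequence $\cdots\to\HH^{j}(\fg,\t;\C)\xrightarrow{\cdot\zeta_i}\HH^{j+m_i}(\fg,\t;\C)\to\Ext^{j}(L_{\zeta_i},\C)\to\cdots$ shows that $I_{(\fg,\t)}(L_{\zeta_i},\C)$ and $(\zeta_i)$ have the same radical, so $Z(\zeta_i)=\V_{(\fg,\t)}(L_{\zeta_i},\C)$ under Hypothesis \ref{A1} alone. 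Combining the two gives exactly the claimed intersection; your phrasing gestures at both ingredients but should be organized this way rather than as a direct ideal inclusion. The obstacle you flag in (1)---duals and tensor products of possibly infinite-dimensional syzygies remaining in the category where $\HH^{\bullet}(\fg,\t;-)$ is defined---is genuine, but it is built into the statement itself and is handled identically in the cited source, so it does not invalidate the plan.
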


\begin{proof}The proof  is the same  as in \cite[Proposition 8.6.1]{BAKN} and will be skipped.
\end{proof}

\medskip

\begin{lem}\label{L:gsupportzeta} Suppose that the pair $(\fg, \ft)$ satisfies the Hypotheses \ref{A1}, \ref{A2} and \ref{A3}. Let $\zeta \in \HH^{n}(\fg, \ft; \C)$ and let $L_{\zeta}$ be the corresponding  Carlson module.
\begin{itemize}
\item[(1)]If $L_{\zeta}$ is finite dimensional, then $$\V_{(\fg, \ft)}(L_{\zeta})= \resstar(\V_{(\fe, \fe_{\0})}(L_{\zeta}))= Z(\zeta).$$
\item[(2)]If $L_{\zeta}$ is infinite dimensional, then $$\V_{(\fg, \ft)}(L_{\zeta}, \C)= \resstar(\V_{(\fe, \fe_{\0})}(L_{\zeta})) = Z(\zeta).$$
\end{itemize}
\end{lem}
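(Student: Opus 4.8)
The plan is to deduce both parts from the corresponding statements at the level of the detecting subalgebra $\fe$, namely Lemma \ref{L:esupportzeta}, together with the quotient description $\V_{(\fe, \fe_{\0})}(\C)/W \cong \V_{(\fg, \ft)}(\C)$ and the embedding \eqref{E:Nresstarimage}. First I would unwind the definition of $Z(\zeta)$: since $\HH^{\bullet}(\fg, \ft; \C) \cong \HH^{\bullet}(\fe, \fe_{\0}; \C)^W$, the zero locus $Z(\zeta) \subseteq \V_{(\fg, \ft)}(\C)$ pulls back under $\resstar$ to the $W$-stable variety $Z(\res(\zeta)) \subseteq \V_{(\fe, \fe_{\0})}(\C)$, and conversely $Z(\zeta)$ is the image of $Z(\res(\zeta))$ under the quotient map. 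By Lemma \ref{L:esupportzeta} we know $\V_{(\fe, \fe_{\0})}(L_{\zeta}) = Z(\res(\zeta))$, so $\resstar(\V_{(\fe, \fe_{\0})}(L_{\zeta})) = Z(\zeta)$ is immediate in both cases, and the content of the lemma is the first equality: that the $\fg$-support variety actually coincides with this image rather than merely containing or being contained in it.

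For part (1), when $L_{\zeta}$ is finite dimensional, I would first establish the inclusion $\V_{(\fg, \ft)}(L_{\zeta}) \subseteq Z(\zeta)$. This follows from the fact that $\zeta$ annihilates $\HH^{\bullet}(\fg, \ft; \Hom_{\C}(L_{\zeta}, L_{\zeta}))$: multiplication by $\zeta$ factors through the connecting map associated to the short exact sequence $0 \to L_{\zeta} \to \Omega^n(\C) \to \C \to 0$, and since $\Omega^n(\C)$ is relatively projective the relevant composite vanishes in positive degrees; hence $\zeta \in I_{(\fg, \ft)}(L_{\zeta})$ and $\V_{(\fg, \ft)}(L_{\zeta}) \subseteq Z(\zeta)$. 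For the reverse inclusion I would use the embedding \eqref{E:Nresstarimage}, $\V_{(\fe, \fe_{\0})}(L_{\zeta})/W \hookrightarrow \V_{(\fg, \ft)}(L_{\zeta})$, combined with $\V_{(\fe, \fe_{\0})}(L_{\zeta})/W = Z(\res(\zeta))/W = Z(\zeta)$. Together these force equality. Part (2) is handled the same way, except that $L_{\zeta}$ being infinite dimensional obliges one to work with the relative support variety $\V_{(\fg, \ft)}(L_{\zeta}, \C)$ in place of $\V_{(\fg, \ft)}(L_{\zeta}, L_{\zeta})$: here one needs $\HH^{\bullet}(\fg, \ft; L_{\zeta}^{\ast})$ finitely generated over $\HH^{\bullet}(\fg, \ft; \C)$, which is exactly Proposition \ref{P:relative}(1) with $n = 1$, and then the inclusion $\V_{(\fg, \ft)}(L_{\zeta}, \C) \subseteq Z(\zeta)$ comes from $\zeta \in I_{(\fg, \ft)}(L_{\zeta}, \C)$ by the same connecting-map argument, while the reverse inclusion again uses the $\fe$-level computation through $\resstar$.

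The main obstacle I anticipate is justifying that the image under $\resstar$ genuinely fills up $\V_{(\fg, \ft)}(L_{\zeta})$ — i.e., that no component of the $\fg$-support variety escapes detection by $\fe$. This is where one really uses the detecting property (Hypothesis \ref{A2}) and the exactness of $(-)^W$: the restriction map $\HH^{\bullet}(\fg, \ft; \C) \to \HH^{\bullet}(\fe, \fe_{\0}; \C)$ must be shown to detect the annihilator ideal, so that $Z(\zeta) = \resstar(Z(\res(\zeta)))$ is not a proper closed subset of $\V_{(\fg, \ft)}(L_{\zeta})$. I expect this to run parallel to \cite[Theorem 6.4.3]{BKN1} and the $W(n)$-analogue in \cite{BAKN}, so in the write-up I would most likely carry out the finite-dimensional argument in full and then remark that the infinite-dimensional case is identical after replacing $\V_{(\fg, \ft)}(L_{\zeta})$ by $\V_{(\fg, \ft)}(L_{\zeta}, \C)$ and invoking Proposition \ref{P:relative}(1).
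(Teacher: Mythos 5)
Your overall strategy coincides with the paper's: the lower bound $Z(\zeta)\subseteq \V_{(\fg,\ft)}(L_\zeta)$ comes from Lemma \ref{L:esupportzeta} together with the embedding \eqref{E:Nresstarimage}, and the upper bound comes from showing a power of $\zeta$ lies in the relevant annihilator ideal, so the two inclusions sandwich the support variety onto $Z(\zeta)$. Two points in your write-up do not hold up as stated, however. First, your justification for $\zeta\in I_{(\fg,\ft)}(L_\zeta)$ rests on the claim that $\Omega^n(\C)$ is relatively projective; it is not. The syzygy $\Omega^n(\C)$ is the kernel inside a relatively projective resolution, and if it were itself relatively projective then $\C$ would have finite relative projective dimension and $\HH^m(\fg,\ft;\C)$ would vanish for $m>n$, contradicting Hypothesis \ref{A1} in any nontrivial case. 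The correct statement, which is what the paper invokes via \cite[Proposition 6.13]{Ben}, is Carlson's lemma that $\zeta^2$ (not $\zeta$) annihilates $\HH^{\bullet}(\fg,\ft;L_\zeta\otimes L_\zeta^{\ast})$; since $Z(\zeta)=Z(\zeta^2)$ this still yields $\V_{(\fg,\ft)}(L_\zeta)\subseteq Z(\zeta)$, so the conclusion survives, but your one-line argument for it does not.

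Second, part (2) is not ``identical after replacing $\V_{(\fg,\ft)}(L_\zeta)$ by $\V_{(\fg,\ft)}(L_\zeta,\C)$.'' The statement compares the \emph{relative} variety $\V_{(\fg,\ft)}(L_\zeta,\C)$ with the image of the \emph{absolute} variety $\V_{(\fe,\fe_{\0})}(L_\zeta)$, and the map $\resstar$ only matches varieties of the same pair, i.e.\ it sends $\V_{(\fe,\fe_{\0})}(L_\zeta,\C)$ into $\V_{(\fg,\ft)}(L_\zeta,\C)$. One therefore needs the intermediate identity $\V_{(\fe,\fe_{\0})}(L_\zeta)=\V_{(\fe,\fe_{\0})}(L_\zeta,\C)$, which the paper obtains from the decomposition $L_\zeta\cong L_{\res(\zeta)}\oplus P$ over $\fe$ with $P$ projective, the fact that the principal block of $\fe$ contains no simple module other than $\C$ (\cite[Proposition 5.2.2]{BKN1}), and \cite[Proposition 5.7.1]{Ben}. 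This is a genuine step with content about the block structure of the detecting subalgebra, and it is missing from your proposal; without it the lower bound $Z(\zeta)\subseteq\V_{(\fg,\ft)}(L_\zeta,\C)$ in the infinite dimensional case is not established.
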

\begin{proof}
\begin{itemize}
\item[(1)]  Since $\resstar(\V_{(\fe, \fe_{\0})}(L_{\zeta}))= Z(\zeta)$ by  Lemma \ref{L:esupportzeta}, we have

$$Z(\zeta) = \resstar(\V_{(\fe, \fe_{\0})}(L_{\zeta})\subseteq \V_{(\fg, \ft)}(L_{\zeta}).$$
For the other containment $\V_{(\fg, \ft)}(L_{\zeta})\subseteq Z(\zeta)$ it is enough to show that some power of $\zeta$ annihilates $\HH^{\bullet}(\fg, \ft; \C)$. One can argue exactly as in \cite[Proposition 6.13]{Ben} to show that $\zeta^2$ annihilates $\HH^{\bullet}(\fg, \ft; \C)$.

\item[(2)] We have
$$\resstar : \V_{(\fe, \fe_{\0})}(L_{\zeta}, \C) \hookrightarrow \V_{(\fg, \ft)}(L_{\zeta}).$$

As in the case of finite groups $L_{\zeta} \cong L_{\res(\zeta)}\oplus P$ as $\fe-$modules where $P$ is some projective $\fe-$module (cf.\cite[Section 5.9]{Ben}).
Observe that by definition $L_{\res(\zeta)}$ can be assumed to lie in the principal block of $\fe$. By \cite[Proposition 5.2.2]{BKN1} we also know that there are no simple modules other than trivial module in the principal block of $\fe$. These observations with \cite[Proposition 5.7.1]{Ben} imply that
$$\V_{(\fe, \fe_{\0})}(L_{\zeta})= \V_{(\fe, \fe_{\0})}(L_{\zeta}, \C).  $$
Since $\resstar(\V_{(\fe, \fe_{\0})}(L_{\zeta}))= Z(\zeta)$ by  Lemma \ref{L:esupportzeta},
$$Z(\zeta) = \resstar(\V_{(\fe, \fe_{\0})}(L_{\zeta}, \C)\subseteq \V_{(\fg, \ft)}(L_{\zeta}, \C).$$
For the reverse containment once again we can use the proof given in \cite[Proposition 6.13]{Ben} to show that $\zeta^2$ annihilates $\HH^{\bullet}(\fg, \ft; L_{\zeta} \otimes L_{\zeta}^{\ast})$.  This also implies that $\zeta^2$ annihilates $\HH^{\bullet}(\fg, \ft; L_{\zeta}).$

\end{itemize}
\end{proof}

 We can now  prove the  realization theorem.
\begin{thm}\label{T:Realization}
\label{T:realization} Suppose that the pair $(\fg, \ft)$ satisfies the Hypotheses \ref{A1}, \ref{A2} and \ref{A3}.

Let  $X$ be a conical subvariety of $\V_{(\fg, \t)}(\C ).$
\begin{itemize}

\item[(a)]If the Carlson modules are finite dimensional for  $\fg $ then there exists a finite dimensional $\fg$-module $M$  such that
$$ \V_{(\fg, \t)}(M)=X.$$
\item[(b)]There exists a $\fg$-module $M$ such that
$$ \V_{(\fg, \t)}(M,\C)=X.$$

\end{itemize}
\end{thm}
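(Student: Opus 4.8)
The plan is to mimic Carlson's original construction of modules with prescribed support, transported to the setting of the pair $(\fg,\ft)$ via the detecting subalgebra. Write $X = Z(\zeta_1) \cap \dots \cap Z(\zeta_n)$ for homogeneous elements $\zeta_1, \dots, \zeta_n \in \HH^{\bullet}(\fg,\ft;\C)$; this is possible since $X$ is a closed conical (hence homogeneous) subvariety of the affine variety $\V_{(\fg,\ft)}(\C) = \MaxSpec \HH^{\bullet}(\fg,\ft;\C)$ and $\HH^{\bullet}(\fg,\ft;\C)$ is finitely generated by Hypothesis \ref{A1}. Then set
\[
M = L_{\zeta_1} \otimes \dots \otimes L_{\zeta_n},
\]
the tensor product of the corresponding Carlson modules. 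For part (a), each $L_{\zeta_i}$ is finite dimensional by assumption, so $M$ is finite dimensional; for part (b) we make no such assumption and work with the pair invariant $\V_{(\fg,\ft)}(M,\C)$ throughout.

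The heart of the argument is the two inclusions. For the containment $\V_{(\fg,\ft)}(M) \subseteq X$ (respectively $\V_{(\fg,\ft)}(M,\C)\subseteq X$), I would invoke Proposition \ref{P:relative}(2): it gives exactly ${\mathcal V}_{(\fg,\ft)}(L_{\zeta_1}\otimes\dots\otimes L_{\zeta_n},\C) \subseteq {\mathcal V}_{(\fg,\ft)}(L_{\zeta_1},\C)\cap\dots\cap{\mathcal V}_{(\fg,\ft)}(L_{\zeta_n},\C)$, and by Lemma \ref{L:gsupportzeta} each ${\mathcal V}_{(\fg,\ft)}(L_{\zeta_i},\C) = Z(\zeta_i)$ (using part (1) of that lemma in the finite-dimensional case and part (2) in general). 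Hence the right side is $Z(\zeta_1)\cap\dots\cap Z(\zeta_n) = X$. For the reverse containment $X \subseteq \V_{(\fg,\ft)}(M)$, I would pass to the detecting subalgebra: by \eqref{E:Nresstarimage} we have $\resstar(\V_{(\fe,\fe_{\0})}(M)) \hookrightarrow \V_{(\fg,\ft)}(M)$ (and similarly for the pair variety in the infinite-dimensional case), so it suffices to show $\resstar(\V_{(\fe,\fe_{\0})}(M)) \supseteq X$. Using Lemma \ref{L:super}(1), $\V_{(\fe,\fe_{\0})}(M) = \V_{(\fe,\fe_{\0})}(L_{\zeta_1})\cap\dots\cap\V_{(\fe,\fe_{\0})}(L_{\zeta_n})$, and by Lemma \ref{L:esupportzeta} each $\V_{(\fe,\fe_{\0})}(L_{\zeta_i}) = Z(\res(\zeta_i))$, the zero locus in $\V_{(\fe,\fe_{\0})}(\C)$. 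Now $\resstar$ corresponds to the restriction map $\HH^{\bullet}(\fg,\ft;\C) \to \HH^{\bullet}(\fe,\fe_{\0};\C)$, which realizes $\HH^{\bullet}(\fg,\ft;\C)$ as the ring of $W$-invariants; since taking $W$-invariants of a finitely generated algebra over $\C$ corresponds to the quotient morphism $\V_{(\fe,\fe_{\0})}(\C) \to \V_{(\fe,\fe_{\0})}(\C)/W \cong \V_{(\fg,\ft)}(\C)$, which is surjective and finite, one checks $\resstar\big(Z(\res(\zeta_1))\cap\dots\cap Z(\res(\zeta_n))\big) = Z(\zeta_1)\cap\dots\cap Z(\zeta_n) = X$ (the containment $\subseteq$ is immediate from $\res(\zeta_i)$ vanishing forcing $\zeta_i$ to vanish on the image; the containment $\supseteq$ uses surjectivity of the quotient map together with $W$-stability of the zero loci of the $\res(\zeta_i)$).

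I expect the main obstacle to be the bookkeeping in the reverse inclusion — specifically, making the identification $\resstar(Z(\res(\zeta_1)) \cap \dots \cap Z(\res(\zeta_n))) = X$ fully rigorous, since it requires knowing that the preimage under the $W$-quotient map of $Z(\zeta_i) \subseteq \V_{(\fg,\ft)}(\C)$ is exactly $Z(\res(\zeta_i)) \subseteq \V_{(\fe,\fe_{\0})}(\C)$. This is a statement about how the ideal generated by $\zeta_i$ in the invariant ring relates, under extension of scalars, to the ideal generated by $\res(\zeta_i)$ in $\HH^{\bullet}(\fe,\fe_{\0};\C)$; it follows from the finiteness and flatness properties of the quotient by a finite group and the explicit description $\HH^{\bullet}(\fg,\ft;\C) \cong \HH^{\bullet}(\fe,\fe_{\0};\C)^W$. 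Apart from this point, the argument is a direct assembly of Lemmas \ref{L:super}, \ref{L:esupportzeta}, \ref{L:gsupportzeta} and Proposition \ref{P:relative}, exactly paralleling \cite[Theorem 8.8.1]{BAKN} and \cite[Theorem 6.7]{BKN1}; for part (b) one also notes that no finiteness of $M$ is needed because every tool invoked (Proposition \ref{P:relative}, the pair-variety version of Lemma \ref{L:gsupportzeta}, and the decomposition $L_{\zeta} \cong L_{\res(\zeta)} \oplus P$ over $\fe$) is available for the relative support variety $\V_{(\fg,\ft)}(-,\C)$.
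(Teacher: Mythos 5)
Your proof is correct and follows essentially the same route as the paper: write $X$ as the common zero locus of homogeneous $\zeta_1,\dotsc,\zeta_n$, set $M=L_{\zeta_1}\otimes\dotsb\otimes L_{\zeta_n}$, obtain one inclusion from Proposition~\ref{P:relative}(2) (resp.\ the tensor product property) together with Lemma~\ref{L:gsupportzeta}, and the other by pushing forward from the detecting subalgebra via Lemmas~\ref{L:super}(1) and~\ref{L:esupportzeta}. The only cosmetic difference is that where you justify the commutation of $\resstar$ with the relevant intersections by a direct argument with the surjective $W$-quotient map, the paper isolates this step as equation~(\ref{E:star}) and cites the argument of \cite[Lemma 8.3.1]{BAKN} using exactness of $(-)^{W}$.
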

\begin{proof}

 Let $J=(\zeta _1, \dotsc , \zeta _n) \subseteq \HH^{\bullet}(\fg, \t; \C)$ be the homogeneous ideal which defines the homogeneous variety $X$. That is,$$X=\mathcal{Z}(\zeta _1)\cap  \dotsb \cap \mathcal{Z}(\zeta _n).$$  Let $M=L_{\zeta _1}\otimes \dotsb \otimes L_{\zeta _n}$.

 Let us also observe that since by the Hypothesis \ref{A2} the group $W$ is exact by arguing as in \cite[Lemma 8.3.1]{BAKN} we can show that
 \begin{equation}\label{E:star}
 \resstar(\V_{(\fe, \fe_{\0})}(L_{\zeta} \otimes L_{\mu})) = \resstar(\V_{(\fe, \fe_{\0})}(L_{\zeta})) \cap \resstar(\V_{(\fe, \fe_{\0})}(L_{\mu})).
 \end{equation}

 for Carlson modules $L_{\zeta}$ and $L_{\mu}$.
\begin{itemize}

\item[(a)]
    Combining (\ref{E:star}),  Lemma  \ref{L:gsupportzeta}  (1) and using the fact that $ \V_{(\fg, \t)}(M_1 \otimes M_2) \subseteq \V_{(\fg, \t)}(M_1) \cap \V_{(\fg, \t)}(M_2)$  we have
\end{itemize}

\begin{align*}
X &=\mathcal{Z}(\zeta _1)\cap  \dotsb \cap \mathcal{Z}(\zeta _n)
  =\V_{(\fg, \t)}(L_{\zeta_{1}}) \cap \dots \cap \V_{(\fg, \t)}(L_{\zeta_{n}} )\\
  &=  \resstar (\V_{(\fe ,\fe_{\0})}(L_{\zeta_{1}} )) \cap \dots \cap \resstar (\V_{(\fe ,\fe_{\0})}(L_{\zeta_{n}} ))
  = \resstar \left(\V_{(\fe,\fe_{\0})}(M) \right)
   \subseteq \V_{(\fg, \t) }\left(M \right) \\
  &  = \V_{(\fg, \t)}(L_{\zeta_{1}}) \cap \dots \cap \V_{(\fg, \t)}(L_{\zeta_{n}} )
   = X.
\end{align*}  It then follows that $\V_{(\fg, \t)}(M)=X.$

\item[(b)] Argued as in (a) by using (\ref{E:star}),  Lemma  \ref{L:gsupportzeta}  (2) and  Proposition ~\ref{P:relative} $(2)$. 

\end{proof}

\section{An Application }

\subsection{The Lie superalgebra $\overline{S}(n)$} We begin by recalling the definition of finite dimensional Lie superalgebras of type $S(n)$. As
a background source we refer the reader to the pioneering paper of Kac \cite{Kac} or the book of M. Scheunert \cite{Sch}.

Let $n$ be a positive integer and assume that $n\geq 2$. Let $V$ be an $n$-dimensional complex vector space and let  $\Lambda(n)$ denote the exterior algebra of $V$.
The exterior algebra $\Lambda(n) = \oplus_{l=0}^n\Lambda(n)_l$ is an associative $\Z$-graded superalgebra. The $\Z_2$-garding is inherited from $\Z$-grading by setting
$\Lambda(n)_{\0} = \oplus \Lambda(n)_{2l}$ and $\Lambda(n)_{\1} = \oplus \Lambda(n)_{2l+1}$. Fix an ordered basis $\xi_1, \dots, \xi_n$ for $V$. For each ordered subset
$I=\{ i_1, i_2,  \dots, i_l\}$ of  $N=\{1, 2, \dots, n\}$ with $i_1< i_2<  \dots< i_l$, let $\xi_I$ denote the product $\xi_1 \xi_2 \dots \xi_l$. The set of all such $\xi_I$ forms a basis of $\Lambda(n)$.

 Then as a super space $W(n)$ is the space  of super derivations of $\Lambda(n)$. $W(n)$ is a Lie superalgebra via supercommutator bracket. The $\Z$-grading on $\Lambda(n)$ induces a $\Z$-grading on $W(n)$
$$W(n)= W(n)_{-1} \oplus W(n)_0 \oplus \dots \oplus W(n)_{n-1},$$
where $W(n)_l$ consists of derivations that increase the degree of a homogeneous element by $l$ and this $\Z-$grading is consistent with the $\Z_2$-grading, i.e.,
$W(n)_{\0} = \oplus W(n)_{2l}$ and $W(n)_{\1} = \oplus W(n)_{2l+1}$. The super commutator bracket preserves the $\Z$-grading on $W(n)$ and thus $W(n)_0$ is a Lie algebra
and each $W(n)_l$ is a $W(n)_0$-module under the bracket action.

Every element of $W(n)$ maps $V$ into $\Lambda(n)$ and since it is a superderivation it is completely determined by its action on $V$. Thus $W(n)$ can be identified with
$\Lambda(n) \otimes V^\ast$ as a vector space.

Denote by  $\partial_i$, $1\leq i \leq n$, the derivation of $\Lambda(n)$ defined by
$$\partial_i(\xi_j) = \delta_{ij}.$$  Then the set of all $\xi_I \otimes \partial_i$ forms a basis of $\Lambda(n) \otimes V^\ast$.
We will write $\xi_I \partial_i$ instead of $\xi_I \otimes \partial_i$. We use the identification above to identify $W(n)$ with $\Lambda(n) \otimes V^\ast$. Under this identification the derivation $\partial_i$ corresponds to the dual of $\xi_i$
and every  element $D$ of $W(n)$ can be uniquely written in the form
$$\sum_{i=1}^nf_i\partial_i,$$
where $f_i \in \Lambda(n)$.

The superalgebra $S(n)$ is the subalgebra of $W(n)$ consisting of all elements $D \in W(n)$ such that
$\dive(D) = 0$, where
$$\dive(\sum_{i=1}^n f_i \partial_i) = \sum_{i=1}^n \partial_i(f_i).$$
The superalgebra $S(n)$ has a $\Z$-grading induced by the grading of $W(n)$
$$S(n)= S(n)_{-1} \oplus S(n)_0 \oplus \dots \oplus S(n)_{n-2}$$
and $S(n)_0$ is isomorphic to $\sl(n)$.

Let $\mathcal{E} =\Sigma _{i=1}^{n}\xi_i\partial _i \in W(n)$.  Note that $\mathcal{E} \notin S(n)$. We shall attach $\mathcal{E}$ to
$S(n)$  and consider the subsuperalgebra $\overline{S}(n)=S(n)\oplus \C \mathcal{E}$ of $W(n)$. The superalgebra $\overline{S}(n)$ admits a $\Z$-grading
$$\overline{S}(n)= \overline{S}(n)_{-1} \oplus \overline{S}(n)_0 \oplus \dots \oplus \overline{S}(n)_{n-2},$$
where $\overline{S}(n)_0 \cong \gl (n)$ as a Lie algebra and $\overline{S}(n)_k  = S(n)_k$ for $k \neq 0$.

\subsection{Notation}We fix the following notations for the rest of the paper. Set $\fg =  \overline{S}(n)$ with $\fg_i = \overline{S}(n)_i$, $i \in \Z$ and $\fg_{\overline{\imath}} =\overline{S}(n)_{\overline{\imath}}$, $\overline{\imath} \   \in \Z_2$. Furthermore, let $\fg^+ = \fg_1\oplus \dots \oplus \fg_{n-2}$. Then
$$\fg = \fg_{-1} \oplus \fg_0 \oplus \fg^+.$$

All $\fg$-modules will be in the category $\mathcal{C}_{(\fg, \fg_0)}$.

\subsection{Basis for $\fg = \overline{S}(n)$} In this subsection we fix a basis for $\fg$ and use this basis in the rest. The $\fg_{-1}$
has basis $\{\partial_i  \mid 1\leq i \leq n\}$ and  $\fg_0$ has basis $\{\xi_i \partial_j \mid 1\leq i, j \leq n\}$.

Let $N=\{1, \dots, n \}$ and let $I$ be an ordered subset of $N$. A spanning set for each $\fg_k$ for $k \neq 0, -1$ can be defined as follows
and contains two distinct types of elements. The elements of type $(I,k)$ are all those of the form $\xi_I\partial _i$
with $i\not\in I$ and $|I| =k+1$. Those of type $(II,k)$ are of the form $\xi _Ah_{ij}$ where $i, j \not\in A$
and $|A| =k$. Here by definition $h_{ij}=\xi _i\partial _i-\xi _j\partial _j$.

The type I elements are all linearly independent, and their span $\fg_k^{I}$ is independent of the span
$\fg_k^{II}$ of the type II elements. The type II elements are not independent however, since $h_{ij}+h_{jk}=h_{ik}$
we reduce the set of type II elements to a basis for $\fg_k^{II}$ as follows. For each $A$ with $|A| =k$, order the complement $B=N - A$
in the natural way as a subset of $N$ and let $i$ be the first element of $B$. Select those element of the form $\xi _Ah_{ij}$ where
$i<j \in B$. These are easily seen to be independent and span $\fg_k^{II}$.

\subsection{Representation theory and atypicality for   $\fg = \overline{S}(n)$}

 Let $\fg= W(n)$ or $\overline{S}(n)$. A Cartan subalgebra $\fh$ of $\fg$ coincides with a Cartan subalgebra of $\fg_0$.  We fix a maximal torus $\fh \subseteq \fg_{0}$ and  a Borel subalgebra $\b_0$ of $\fg_0$ . We will denote by
$X_0^+ $  the parametrizing set of highest weights for the simple finite dimensional $\fg_0$-supermodules with respect to our fixed  pair $(\fh , \b_0)$.  Let $L_0(\la)$ denote the simple finite dimensional $\fg_{0}$-supermodule with
highest weight $\la\in X_{0}^{+}.$ We view $L_{0}(\lambda)$ as a $\fg_{0}$-supermodule concentrated in degree $\0$.

The \emph{Kac supermodule} $K(\la )$ is the induced representation of $\fg$,
$$K(\lambda )=U(\fg)\otimes _{U(\fg_{0}\oplus \fg^{+})}L_0(\la ),$$
where $L_0(\la )$ is viewed as a $\fg_{0}\oplus \fg^{+}$ by letting $\fg^+$ act trivially.
 $K(\la )$ is  finite dimensional and   with respect to the choice of Borel subalgebra $\b_0\oplus \fg^{+} \subseteq \fg$ one has a dominance order on weights.  With respect to this ordering $K(\lambda)$ has highest weight $\la$ and a unique simple quotient which we denote by $L(\la )$.
Conversely, every finite dimensional simple supermodule appears as the head of some Kac supermodule (cf.\ \cite[Theorem 3.1]{Ser}).

From our discussion above we see that  the set
$$\{L(\la)\mid \la \in X_0^+\}$$ is a complete irredundant collection of simple finite dimensional $\fg$-supermodules.

Following Serganova, we call
$\la \in \fh^\ast$ \emph{typical} if $K(\la)$ is a simple module \emph{atypical} otherwise.

   Choose the standard basis $\varepsilon _1,\dotsc , \varepsilon _n$ of $\fh^\ast$ where $\varepsilon_{i}(\xi_{j}\partial_{j})=\delta_{i,j}$ for all $1 \leq i,j \leq n.$
Serganova determines a necessary and sufficient combinatorial condition for $\lambda$ to be typical. Namely, by \cite[Lemma 5.3]{Ser} one has that the set of atypical weights $\Omega_W$ for $W(n)$ is
 $$\Omega_W =\{a\varepsilon_i+\varepsilon_{i+1}+ \dotsb  +\varepsilon_n \in \fh^{*}\mid a\in \C,\ 1\leq i\leq n\}.$$
 and the set of atypical  weights  $\Omega$ for $\fg = \overline{S}(n)$ is
 $$\Omega =\{a\varepsilon_1+\dots.+a\varepsilon_{i-1}+b\varepsilon_i+(a+1)\varepsilon_{i+1}+ \dots +(a+1)\varepsilon_n\mid a, b \in \C, 1\leq i\leq n\}.$$

Let $\sigma =\varepsilon_1+...+\varepsilon_n$. For each $\la \in \Omega, \la \neq a\sigma $ there exists a unique $\overline{\la}=\la-a\sigma $
such that  $\overline{\la}$ is atypical for $W(n)$. Since $\dim L(a\sigma )=1$, we have
$$L(\la)\cong L(\overline{\la})\otimes L(a\sigma ).$$

\begin{thm}\label{T:Scutinvariants}  Let $\fg= \overline{S}(n)$ and let $p\geq 0$. Then, $\Lambda_{s}^p((\fg/\fg_0)^\ast)^{\fg_0}
\cong S^p(\fg_{-1}^\ast \oplus \fg_1^\ast )^{\fg_0}.$
\end{thm}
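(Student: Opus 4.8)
The plan is to transfer the computation of the cochains of the pair $(W(n),W(n)_0)$, carried out in \cite{BAKN}, along the embedding $\overline{S}(n)\subseteq W(n)$.

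I would first record the easy inclusion. Since $\fg/\fg_0\cong\fg_{-1}\oplus\fg_1\oplus\dots\oplus\fg_{n-2}$ as $\fg_0$-modules, and the summands $\fg_{-1}$ and $\fg_1$ both lie in $\Z_2$-degree $\1$, the restriction of the super wedge to $\fg_{-1}^\ast\oplus\fg_1^\ast$ is the ordinary symmetric algebra; thus $S^p(\fg_{-1}^\ast\oplus\fg_1^\ast)=\Lambda_s^p(\fg_{-1}^\ast\oplus\fg_1^\ast)$ is a $\fg_0$-stable subspace of $\Lambda_s^p((\fg/\fg_0)^\ast)$, and taking $\fg_0$-invariants gives $S^p(\fg_{-1}^\ast\oplus\fg_1^\ast)^{\fg_0}\subseteq\Lambda_s^p((\fg/\fg_0)^\ast)^{\fg_0}$. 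All the content lies in the reverse inclusion.

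For that, note that $\fg_0=\gl(n)=W(n)_0$ and $\fg_k=\overline{S}(n)_k=S(n)_k\subseteq W(n)_k$ for $k\neq 0$, with $S(n)_{-1}=W(n)_{-1}$. Hence $\fg/\fg_0$ is a $\gl(n)$-submodule of $W(n)/W(n)_0$, and dualizing the inclusion produces a surjection of $\gl(n)$-modules $\pi\colon(W(n)/W(n)_0)^\ast\twoheadrightarrow(\fg/\fg_0)^\ast$ which is the identity on the degree-$(-1)$ summand and carries the degree-$1$ summand $W(n)_1^\ast$ onto $\fg_1^\ast$. In each degree $p$ the induced map $\Lambda_s^p(\pi)$ is a surjection of $\gl(n)$-modules, and since $\gl(n)$ is reductive it stays surjective after taking invariants; recalling $W(n)_0=\fg_0$, this means $\Lambda_s^p((\fg/\fg_0)^\ast)^{\fg_0}$ is the image under $\Lambda_s^p(\pi)$ of $\Lambda_s^p((W(n)/W(n)_0)^\ast)^{W(n)_0}$. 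Now I would invoke the cochain computation of \cite{BAKN}, which identifies $\Lambda_s^\bullet((W(n)/W(n)_0)^\ast)^{W(n)_0}$ with $S^\bullet((W(n)_{-1}\oplus W(n)_1)^\ast)^{W(n)_0}$; equivalently, every invariant cochain of $(W(n),W(n)_0)$ already lies in $\Lambda_s^\bullet(W(n)_{-1}^\ast\oplus W(n)_1^\ast)$. Applying $\Lambda_s^p(\pi)$ and using $\pi(W(n)_{-1}^\ast\oplus W(n)_1^\ast)=\fg_{-1}^\ast\oplus\fg_1^\ast$, this image lands in $\Lambda_s^p(\fg_{-1}^\ast\oplus\fg_1^\ast)=S^p(\fg_{-1}^\ast\oplus\fg_1^\ast)$, and being $\fg_0$-invariant it lies in $S^p(\fg_{-1}^\ast\oplus\fg_1^\ast)^{\fg_0}$. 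This is the reverse inclusion, which together with the first step completes the argument.

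The step I expect to be the real obstacle is the imported fact about $W(n)$ --- that no $W(n)_0$-invariant cochain genuinely involves a one-form of $\Z$-degree $\geq 2$. The obvious first handle is the Euler element $\mathcal{E}=\sum_i\xi_i\partial_i\in\gl(n)$, which acts on $\fg_k^\ast$ by the scalar $-k$, so invariance forces the $\Z$-degrees of the factors of each monomial to sum to zero; but this alone does not rule out combinations such as a degree-$2$ form against two degree-$(-1)$ forms. What actually kills those terms is that the relevant $\gl(n)$-isotypic components (for instance a copy of $\det V$) are absent from the symmetric and exterior tensor spaces in which they would have to occur, and verifying this is exactly the invariant-theoretic content of the $W(n)$ computation in \cite{BAKN}, which I would cite rather than reproduce. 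Should only the cohomology ring, rather than the full ring of cochains, be available there, one would first have to isolate the cochain statement.
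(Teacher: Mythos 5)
Your proposal is correct and follows the same route as the paper, whose entire proof is the one-line observation that $\overline{S}(n)$ is a $\Z$-graded subalgebra of $W(n)$ with $\overline{S}(n)_0=W(n)_0$, so the result follows from the cochain computation in \cite[Theorem 4.2.1]{BAKN}. Your write-up simply supplies the transfer mechanism (dualizing the inclusion to a surjection of $\gl(n)$-modules and using reductivity to surject on invariants) that the paper leaves implicit, and your closing caveat is moot since the cited result is indeed the invariant-cochain statement rather than a cohomology statement.
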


\begin{proof}

Since $\fg$ is a $\Z$-graded subalgebra of $W(n)$ and  $\fg_0 \cong W(n)_0$, the result follows from \cite[Theorem 4.2.1]{BAKN}.
\end{proof}

Now we can use Theorem~\ref{T:Scutinvariants} to show that the cohomology ring $\HH^\bullet (\fg,\fg_0;\C)$
is identified with a ring of invariants.

\begin{lem}\label{T:Scohomologyring} Let $\fg=\overline{S}(n)$. Let $G_{0}\cong \operatorname{GL}(n) $ be the connected reductive group such that $\Lie (G_{0}) =\fg_{0}$ and the adjoint action of $G_{0}$ on $\fg$ differentiates to the adjoint action of $\fg_{0}$ on $\fg.$ Then,
$$\HH^\bullet (\fg,\fg_0;\C)\cong S(
\fg_{-1}^\ast \oplus \fg_1^\ast )^{\fg_0}=S(
\fg_{-1}^\ast \oplus \fg_1^\ast )^{G_0}.$$
\end{lem}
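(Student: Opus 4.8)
The plan is to deduce this from Theorem~\ref{T:Scutinvariants} together with Proposition~\ref{P:cohomologyring}. First I would check that the hypothesis of Proposition~\ref{P:cohomologyring} holds for the pair $(\fg,\fg_0)$. Since $\fg=\overline{S}(n)$ is a $\Z$-graded subalgebra of $W(n)$ with $\fg_0\cong\gl(n)$, one has $[\fg_{\0},\fg_{\0}]\subseteq\fg_{\0}$ automatically (the even part is a Lie algebra), and $[\fg_{\1},\fg_{\1}]\subseteq\fg_{\0}$ by the $\Z_2$-grading; more is true, namely $[\fg_{-1},\fg_{-1}]=0$ and the bracket respects the $\Z$-grading, but for the criterion all that is needed is $[\fg,\fg]\subseteq\fg_0$. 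Actually one must be a little careful: $[\fg_{-1},\fg_1]\subseteq\fg_0$ and $[\fg_0,\fg_{\pm 1}]\subseteq\fg_{\pm 1}$, so $[\fg,\fg]$ is not contained in $\fg_0$ in general — rather $[\fg_{\0},\fg_{\1}]\subseteq\fg_{\1}$. So the direct appeal to Proposition~\ref{P:cohomologyring} as stated is not available; instead I would invoke the Example following Corollary~\ref{T:Gfinitegeneration}, which records the isomorphism $\HH^{\bullet}(\fg,\fg_{\0};\C)\cong\Lambda_s^{\bullet}((\fg/\fg_{\0})^{\ast})^{\fg_{\0}}$ valid for the special choice $\t=\fg_{\0}$ (this uses only $[\fg_{\1},\fg_{\1}]\subseteq\fg_{\0}$, so the differential on $C^{\bullet}(\fg,\fg_0;\C)$ vanishes).

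Next I would combine this with Theorem~\ref{T:Scutinvariants}: applying that theorem degree by degree and summing over $p\ge 0$ gives
\[
\HH^{\bullet}(\fg,\fg_0;\C)\cong\Lambda_s^{\bullet}((\fg/\fg_0)^{\ast})^{\fg_0}\cong S(\fg_{-1}^{\ast}\oplus\fg_1^{\ast})^{\fg_0}.
\]
Here I am using that the super wedge product $\Lambda_s^{\bullet}$ on the purely odd space $\fg/\fg_0=\fg_{-1}\oplus\fg_1$ (note $\fg_k$ for $k\ge 2$ is even when $k$ is even, so strictly $\fg/\fg_0$ also contains even pieces $\fg_2,\fg_4,\dots$; but Theorem~\ref{T:Scutinvariants} has already done the bookkeeping and tells us the invariants collapse to $S^{\bullet}(\fg_{-1}^{\ast}\oplus\fg_1^{\ast})$) is the ordinary symmetric algebra, so the graded ring structures match.

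Finally, for the last equality $S(\fg_{-1}^{\ast}\oplus\fg_1^{\ast})^{\fg_0}=S(\fg_{-1}^{\ast}\oplus\fg_1^{\ast})^{G_0}$ I would argue as in the Example after Corollary~\ref{T:Gfinitegeneration}: since $G_0\cong\operatorname{GL}(n)$ is connected with $\Lie(G_0)=\fg_0$ and its action on $\fg$ differentiates to the adjoint action, a polynomial function on $\fg_{-1}\oplus\fg_1$ is $\fg_0$-invariant if and only if it is $G_0$-invariant (connectedness of $G_0$ makes the two invariance conditions coincide). This also shows, via Hilbert's theorem (as in Corollary~\ref{T:Gfinitegeneration}(a)), that the ring is finitely generated.

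The main obstacle is the first paragraph's subtlety: one has to be honest that $[\fg,\fg]\not\subseteq\fg_0$ for $\overline{S}(n)$, so Proposition~\ref{P:cohomologyring} does not apply verbatim, and the correct input is the weaker statement $[\fg_{\1},\fg_{\1}]\subseteq\fg_{\0}$ that makes the relative cochain differential vanish when $M=\C$ and $\t=\fg_{\0}$ — exactly the content of the Example. Everything after that is bookkeeping: matching the graded pieces supplied by Theorem~\ref{T:Scutinvariants} and passing between $\fg_0$- and $G_0$-invariance.
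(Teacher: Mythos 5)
Your overall strategy --- show the differential on the relative cochains vanishes, then apply Theorem \ref{T:Scutinvariants} and the connectedness of $G_0$ --- is the paper's strategy, and your second and third paragraphs are fine. But the first paragraph, which carries the real content, has a genuine gap coming from conflating $\fg_0$ with $\fg_{\0}$. In this section $\fg_0=\overline{S}(n)_0\cong\gl(n)$ is the degree-zero piece of the $\Z$-grading, whereas $\fg_{\0}=\fg_0\oplus\fg_2\oplus\fg_4\oplus\cdots$ is the even part of the $\Z_2$-grading; for $n\geq 4$ these are different subalgebras. The Example following Corollary \ref{T:Gfinitegeneration} computes $\HH^{\bullet}(\fg,\fg_{\0};\C)$ for the pair $(\fg,\fg_{\0})$, which is not the pair in the Lemma, so it cannot be invoked here. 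More to the point, its mechanism --- every bracket of two elements of the (purely odd) quotient lands back in $\t$ --- fails for $(\fg,\fg_0)$: one has $[\fg_1,\fg_1]\subseteq\fg_2$ and $[\fg_1,\fg_2]\subseteq\fg_3$, neither of which dies in $\fg/\fg_0$. So the differential on $C^{\bullet}(\fg,\fg_0;\C)$ does not vanish for the reason you give.

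The fix (and the paper's actual argument) is to use Theorem \ref{T:Scutinvariants} \emph{before} attacking the differential, not only afterwards for bookkeeping. Since $d$ commutes with the $\fg_0$-action, $d\phi$ is again an invariant cochain, hence by Theorem \ref{T:Scutinvariants} it is determined by its values on wedges of elements of $\fg_{-1}\oplus\fg_1$. For such arguments $x_i$, the second sum in \eqref{eq:cohom} vanishes because $M=\C$ is trivial, and in the first sum each $[x_i,x_j]$ lies in $[\fg_{-1},\fg_{-1}]=0$, in $[\fg_{-1},\fg_1]\subseteq\fg_0$ (hence is zero in $\fg/\fg_0$), or in $[\fg_1,\fg_1]\subseteq\fg_2$ --- and in this last case $\phi$ kills the resulting wedge, because Theorem \ref{T:Scutinvariants} says precisely that invariant cochains vanish on wedges involving $\fg_{\geq 2}$. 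This is what the paper means by ``arguing as in Proposition \ref{P:cohomologyring} by using Theorem \ref{T:Scutinvariants}.'' Once the differentials are identically zero, the remainder of your proof --- the degree-by-degree identification with $S(\fg_{-1}^{\ast}\oplus\fg_1^{\ast})^{\fg_0}$ and the equality of $\fg_0$- and $G_0$-invariants by connectedness --- goes through as written.
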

\begin{proof} Recall that cochains are defined as follows
\begin{equation} \label{E:defco} C^p(\La, \La_0; \C) =  \Hom_{\La_0}(\Lambda_s^p(\La /\La_0), \C) \cong \Lambda_{s}^{p} ((\fg / \fg_{0})^{*} )^{\fg_{0}}.
\end{equation}
By Theorem ~\ref{T:Scutinvariants} we have
\begin{equation}\label{E:cutinvar1} \Lambda_{s}^{p} ((\fg / \fg_{0})^{*} )^{\fg_{0}} \cong \Lambda_{s}^{p} (\fg_{-1}^{*} \oplus \fg_{1}^{*} )^{\fg_{0}} .
\end{equation}
Combining (\ref{E:defco}) and (\ref{E:cutinvar1}), we have

\begin{equation}
C^p(\La, \La_0; \C) \cong \Lambda_{s}^{p} (\fg_{-1}^{*} \oplus \fg_{1}^{*} )^{\fg_{0}}.
\end{equation}
Next step is to show that differentials are identically zero and this can be done by arguing as in Proposition \ref{P:cohomologyring} by using  Theorem \ref{T:Scutinvariants}

\end{proof}

\begin{thm}\label{T:fg}
Let $M$ be a finite dimesional $\fg= \overline{S}(n)$-supermodule.
\begin{itemize}
\item [(a)]The superalgebra $\HH^\bullet (\fg,\fg_0;\C)$ is a finitely generated commutative  ring.
\item [(b)]The cohomology $\HH^\bullet (\fg,\fg_0;M)$ is finitely generated as an $\HH^\bullet (\fg,\fg_0;\C)$-supermodule.
\end{itemize}
\end{thm}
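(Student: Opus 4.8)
The plan is to reduce Theorem~\ref{T:fg} to the general finite-generation results already established in Section 2, namely Corollary~\ref{T:Gfinitegeneration}. First I would observe that the hypothesis of that corollary is exactly what Lemma~\ref{T:Scohomologyring} supplies: we have shown
$$\HH^\bullet(\fg,\fg_0;\C)\cong \Lambda_s^\bullet((\fg/\fg_0)^\ast)^{\fg_0}\cong S(\fg_{-1}^\ast\oplus\fg_1^\ast)^{\fg_0},$$
so the differentials on the cochain complex $C^\bullet(\fg,\fg_0;\C)$ vanish and the relative cohomology ring is literally the ring of cochains. Moreover $\ft=\fg_0\cong\gl(n)$ is a reductive Lie algebra, and $G_0\cong\operatorname{GL}(n)$ is the connected reductive algebraic group with $\Lie(G_0)=\fg_0$ acting on $\fg$ by the adjoint action (this action differentiates to the adjoint action of $\fg_0$, exactly as required). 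So the pair $(\fg,\fg_0)=(\overline{S}(n),\overline{S}(n)_0)$ satisfies every hypothesis of Corollary~\ref{T:Gfinitegeneration}.

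For part (a), I would then simply invoke Corollary~\ref{T:Gfinitegeneration}(a): since $\HH^\bullet(\fg,\fg_0;\C)\cong S(\fg_{-1}^\ast\oplus\fg_1^\ast)^{\fg_0}=S(\fg_{-1}^\ast\oplus\fg_1^\ast)^{G_0}$ and $G_0$ is reductive, Hilbert's theorem on finite generation of rings of invariants (\cite[Theorem 3.6]{PV}) gives that this ring is finitely generated. Commutativity is clear since $S(\fg_{-1}^\ast\oplus\fg_1^\ast)$ is a (commutative) symmetric algebra on the purely even-looking space $\fg_{-1}^\ast\oplus\fg_1^\ast$ — here I would note that although $\fg_{-1}$ and $\fg_1$ are the odd parts of the $\Z$-grading contributing odd elements of $\fg$, the super wedge product is the symmetric product on odd spaces, so $\Lambda_s^\bullet$ on $(\fg/\fg_0)^\ast$ restricted to these summands is genuinely the polynomial (symmetric) algebra, hence commutative; taking $G_0$-invariants preserves commutativity.

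For part (b), I would apply Corollary~\ref{T:Gfinitegeneration}(b) to the finite dimensional $\fg$-module $M$: it yields directly that $\HH^\bullet(\fg,\fg_0;M)$ is finitely generated as a module over $\HH^\bullet(\fg,\fg_0;\C)$. The argument there runs through the identification $C^\bullet(\fg,\fg_0;M)\cong(\Lambda_s^\bullet((\fg/\fg_0)^\ast)\otimes M)^{G_0}$, the fact that this is a finitely generated module over $\Lambda_s^\bullet((\fg/\fg_0)^\ast)^{G_0}$ by \cite[Theorem 3.25]{PV} (reductivity of $G_0$ again), and then the passage from finite generation of cochains to finite generation of cohomology as in \cite[Theorem 2.5.3]{BKN1} — which is legitimate here because the differentials on the coefficient-$\C$ complex vanish, so $\HH^\bullet(\fg,\fg_0;\C)$ equals its own ring of cochains and Noetherianity flows through.

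I do not anticipate a serious obstacle: the real content has already been placed in Theorem~\ref{T:Scutinvariants} (the cut invariant theorem, imported from \cite{BAKN}) and in Lemma~\ref{T:Scohomologyring} (vanishing of differentials). The only point requiring a moment's care is making sure that the abstract hypotheses of Corollary~\ref{T:Gfinitegeneration} are verified verbatim — in particular that $\fg_0$ is reductive as a Lie \emph{algebra} ($\gl(n)$ is) and that the isomorphism $\HH^\bullet(\fg,\fg_0;\C)\cong\Lambda_s^\bullet((\fg/\fg_0)^\ast)^{\fg_0}$ holds, both of which are now in hand. So the proof is essentially a two-line citation: part (a) is Corollary~\ref{T:Gfinitegeneration}(a) together with Lemma~\ref{T:Scohomologyring}, and part (b) is Corollary~\ref{T:Gfinitegeneration}(b).
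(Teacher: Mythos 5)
Your argument is exactly the paper's: the paper's proof is the one-line citation "This follows from Lemma~\ref{T:Scohomologyring} and Corollary~\ref{T:Gfinitegeneration}," and you have simply unpacked that citation, verifying that $\fg_0\cong\gl(n)$ is reductive with $G_0\cong\operatorname{GL}(n)$ and that Lemma~\ref{T:Scohomologyring} supplies the required identification with invariants. Nothing is missing and nothing is different in substance.
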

\begin{proof} This follows from  Lemma \ref{T:Scohomologyring} and Corollary \ref{T:Gfinitegeneration}.
\end{proof}

\subsection{}
Recall that the Lie superalgebra $\fg=\overline{S}(n)$ admits a
$\Z$-grading and  $\fg_0\cong \mathfrak{sl}(n)\oplus \C \cong \gl (n)$ as a Lie algebra.
Following \cite{BAKN}, we are interested in the natural problem of computing the relative cohomology
for the pair $(\fg, \fg_0)$.

Since by Lemma \ref{T:Scutinvariants}
\begin{equation}\label{E:Scohomiso}
\HH^{\bullet}(\fg ,\fg_{0};\C)\cong S^{\bullet}\left((\fg_{-1} \oplus \fg_{1})^{*} \right)^{\fg_{0}},
\end{equation}
   it is enough to compute $S^{\bullet}\left((\fg_{-1} \oplus \fg_{1})^{*} \right)^{\fg_{0}}$.  Since $\fg$ is a $\Z$-graded subalgebra of $W(n)$,  we will benefit from calculations done in \cite[Section 5]{BAKN}.

\subsection{}

Let $T \subseteq G_{0}$ denote the maximal torus consisting set of all diagional matrices  and $\fh  \subseteq  \fg_{0}$ to be
$\fh = \operatorname{Lie}\left(T \right),$ the Cartan subalgebra of $\fg_0$.

Recall that $\fg_{-1}$  has basis $\{\partial_{i}\mid 1\leq i\leq n\}$, $\fg_{0}$
has basis $\{ \xi_i\partial_j \mid 1\leq i, j \leq n\}$
and $\fg_{1} $ with basis $\{\xi_{i}\xi_{j}\partial_{k}\mid 1\leq i\neq j \neq k\leq n, i<j\}\cup \{\xi_1h_{2j}
\mid 3\leq j\leq n\}\cup \{\xi_ih_{1k}\mid  2\leq i\neq k\leq n \}$.

Let $\ff_{\1}$ be as in \cite[Lemma 5.5.2]{BAKN}, i.e., the $\C$-span of the vectors
$$\{\partial _1, \xi_1\xi_i\partial _i\mid i=2, \dotsc , n\}.$$
The intersection of $\ff_{\1}$ with $\fg_{-1} \oplus \fg_{1}$ is spanned by the vectors
\begin{equation}\label{E:a1}
\fa_{\1}:=\{\partial _1,\xi_1\xi_2\partial_2- \xi_1\xi_i\partial _i\mid i=3, \dotsc , n\}.
\end{equation}

Let $N$ denote the normalizer of $\fa_{\1}$ in $G_0$. By arguing as in \cite[Lemma]{BKN} one can show that  $$N=T\Sigma_{n-2}.$$

\subsection{}We can now explicitly describe the cohomology ring $\HH^\bullet(\fg, \fg_0; \C)$. Let $Z_k\in \fa_{\1}^{\ast}$ be given by
$Z_k(\xi_1\xi_2\partial_2-\xi_1\xi_i\partial_i)=\delta_{i,k}$ ($i,k=3, \dotsc, n$) and $Z_k(\partial_1)=0$. Let $\partial_1^\ast$ be given by
$\partial_1^\ast(\xi_1\xi_2\partial_2-\xi_1\xi_i\partial_i)=0$ for all $i=3,\dotsc, n$ and $\partial_1^{\ast}(\partial_1)=1$.

\begin{thm}\label{T:ScalculatingR} Restriction of functions defines an isomorphism,
\[
\HH^\bullet(\fg, \fg_0; \C) \cong S^{\bullet}\left((\fg_{-1} \oplus \fg_{1})^{*} \right)^{\fg_{0}} \cong S^{\bullet}(\fa_{\1}^{*})^{N} = \C[Z_{3}\partial_1^{\ast}, \dotsc , Z_{n}\partial_1^{\ast}]^{\Sigma_{n-2}},
\]
 where $\Sigma_{n-2}$ acts on $Z_{3}\partial_1^{\ast}, \dotsc , Z_{n}\partial_1^{\ast}$ by permuations.  Therefore, $R$ is a polynomial ring in $n-2$
variables of degree $2, 4, \dotsc , 2n-4.$
\end{thm}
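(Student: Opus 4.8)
The plan is to establish the chain of isomorphisms in three stages, each reducing the ambient group of invariants. First I would identify $\HH^\bullet(\fg,\fg_0;\C)$ with $S^\bullet((\fg_{-1}\oplus\fg_1)^\ast)^{\fg_0}$; this is already done in Lemma~\ref{T:Scohomologyring} (equivalently \eqref{E:Scohomiso}), so only the last two isomorphisms need genuine work. For the middle isomorphism $S^\bullet((\fg_{-1}\oplus\fg_1)^\ast)^{\fg_0}\cong S^\bullet(\fa_{\1}^\ast)^N$, the idea is that restriction of polynomial functions along the inclusion $\fa_{\1}\hookrightarrow \fg_{-1}\oplus\fg_1$ is the desired map, and this is exactly the kind of statement that \cite[Section~5]{BAKN} supplies for $W(n)$: there one has $S^\bullet(\ff_{\1}^\ast)$-invariants under the normalizer of $\ff_{\1}$ computing the cohomology of $W(n)$, via a Luna-slice / polar-representation argument (the invariant theory of \cite{LR}, \cite{dadokkac}). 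I would argue that since $\fg$ is a $\Z$-graded subalgebra of $W(n)$ with $\fg_0\cong W(n)_0$, and since $\fa_{\1}=\ff_{\1}\cap(\fg_{-1}\oplus\fg_1)$ by \eqref{E:a1}, the restriction of the $W(n)$-slice to the $\overline{S}(n)$-setting is again a slice: every closed $\fg_0$-orbit in $\fg_{-1}\oplus\fg_1$ meets $\fa_{\1}$, and two points of $\fa_{\1}$ are $\fg_0$-conjugate iff they are $N$-conjugate. This gives $S^\bullet((\fg_{-1}\oplus\fg_1)^\ast)^{G_0}\cong S^\bullet(\fa_{\1}^\ast)^N$ by the Chevalley-type restriction theorem for polar representations.

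Next I would compute $N$ and unwind the invariant ring. The paper records $N=T\Sigma_{n-2}$, so $S^\bullet(\fa_{\1}^\ast)^N=(S^\bullet(\fa_{\1}^\ast)^T)^{\Sigma_{n-2}}$. The torus $T$ acts on the coordinate functions $\partial_1^\ast$ and $Z_3,\dots,Z_n$ on $\fa_{\1}$ by characters: $\partial_1^\ast$ has weight $-\varepsilon_1$ (it is dual to $\partial_1\in\fg_{-1}$), while $Z_k$, being dual to $\xi_1\xi_2\partial_2-\xi_1\xi_k\partial_k\in\fg_1$, has weight $-(\varepsilon_1+\varepsilon_k)$ — here one uses $\varepsilon_i(\xi_j\partial_j)=\delta_{ij}$ and that $\xi_1\xi_k\partial_k$ has weight $\varepsilon_1$. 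Hence $Z_k\partial_1^\ast$ has $T$-weight $-(\varepsilon_1+\varepsilon_k)-(-\varepsilon_1)=-\varepsilon_k$... let me instead track it directly: one checks that the monomials in $\partial_1^\ast, Z_3,\dots,Z_n$ that are $T$-invariant are exactly the polynomials in the products $Z_k\partial_1^\ast$ — because an arbitrary monomial $(\partial_1^\ast)^a Z_3^{b_3}\cdots Z_n^{b_n}$ has $T$-weight a combination forcing $a=\sum b_k$ and, after a short linear-algebra check using that $\varepsilon_1,\dots,\varepsilon_n$ are independent, forcing the weight to vanish only when it is a product of the $Z_k\partial_1^\ast$. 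So $S^\bullet(\fa_{\1}^\ast)^T=\C[Z_3\partial_1^\ast,\dots,Z_n\partial_1^\ast]$, a polynomial ring in $n-2$ algebraically independent generators.

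Finally I would identify the residual $\Sigma_{n-2}$-action. The symmetric group $\Sigma_{n-2}$ (sitting inside the normalizer, permuting indices $3,\dots,n$) permutes the $Z_k$ and fixes $\partial_1^\ast$, hence permutes the generators $Z_k\partial_1^\ast$; therefore $S^\bullet(\fa_{\1}^\ast)^N=\C[Z_3\partial_1^\ast,\dots,Z_n\partial_1^\ast]^{\Sigma_{n-2}}$, which by the fundamental theorem of symmetric functions is the polynomial ring $\C[e_1,\dots,e_{n-2}]$ on the elementary symmetric functions $e_j$ in the $Z_k\partial_1^\ast$. Grading: in the cohomological grading each $Z_k$ lives in degree $1$ (it is an odd cochain direction) and $\partial_1^\ast$ in degree $1$ as well, so each $Z_k\partial_1^\ast$ has cohomological degree $2$, whence $e_j$ has degree $2j$ and the degrees run $2,4,\dots,2n-4$. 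This yields $R\cong\C[e_1,\dots,e_{n-2}]$, a polynomial ring in $n-2$ variables of those degrees.

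The main obstacle is the middle isomorphism: justifying that restriction to $\fa_{\1}$ induces an isomorphism on invariants requires knowing that $(G_0,\fg_{-1}\oplus\fg_1)$ is a polar (or visible/coregular) representation with Cartan subspace $\fa_{\1}$ and Weyl group $N/T$. I expect to handle this not by reproving the invariant theory from scratch but by reducing to the already-established $W(n)$ statement in \cite[Section~5]{BAKN}: one observes that $\fg_{-1}\oplus\fg_1$ is a $\fg_0$-submodule of $W(n)_{-1}\oplus W(n)_1$ complementary (as a $\fg_0$-module) to the part not meeting $\overline{S}(n)$, so a Cartan subspace and its normalizer restrict compatibly. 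Verifying this compatibility carefully — in particular that $\fa_{\1}$ still meets every closed orbit — is the delicate point; everything after it is the routine torus-weight and symmetric-function bookkeeping sketched above.
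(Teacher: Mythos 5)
Your overall structure matches the paper's: (i) $\HH^\bullet(\fg,\fg_0;\C)\cong S^\bullet((\fg_{-1}\oplus\fg_1)^*)^{\fg_0}$ from Lemma~\ref{T:Scohomologyring}; (ii) restriction to $\fa_{\1}$ gives $S^\bullet(\fa_{\1}^*)^N$; (iii) compute $T$-invariants; (iv) take $\Sigma_{n-2}$-invariants via elementary symmetric polynomials. The one place you and the paper diverge is in how (ii) is justified. The paper simply cites \cite[Corollary 4.4]{LR} for the middle isomorphism, having set up $\fa_{\1}=\ff_{\1}\cap(\fg_{-1}\oplus\fg_1)$ and $N=T\Sigma_{n-2}$ in the preceding subsections; you instead try to argue that $\fa_{\1}$ inherits the Cartan-subspace property from the $W(n)$ slice $\ff_{\1}$ and correctly flag that this inheritance is not automatic (the intersection of a Cartan subspace with a subrepresentation need not be a Cartan subspace). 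That caveat is real, and the paper itself does not verify it explicitly, so your concern is a legitimate observation about the argument rather than a defect specific to your proposal. Net effect: same method, with you being more explicit about where the Luna--Richardson hypothesis is the load-bearing step.

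One bookkeeping correction in your $T$-weight analysis. Each of $\xi_1\xi_2\partial_2$ and $\xi_1\xi_k\partial_k$ has $T$-weight $\varepsilon_1$ (the $\xi_k$ and $\partial_k$ weights cancel), so $Z_k$ has $T$-weight $-\varepsilon_1$ for every $k$, not $-(\varepsilon_1+\varepsilon_k)$ as you first wrote; likewise $\partial_1$ has weight $-\varepsilon_1$, so $\partial_1^*$ has weight $+\varepsilon_1$, not $-\varepsilon_1$. A monomial $(\partial_1^*)^a Z_3^{b_3}\cdots Z_n^{b_n}$ then has $T$-weight $(a-\sum_k b_k)\varepsilon_1$, and the weight-zero condition is exactly $a=\sum_k b_k$. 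Only $\varepsilon_1$ appears, so no linear-independence argument among $\varepsilon_1,\dots,\varepsilon_n$ is needed. You still land on the correct $S^\bullet(\fa_{\1}^*)^T=\C[Z_3\partial_1^*,\dots,Z_n\partial_1^*]$, and the remaining $\Sigma_{n-2}$-step and the degree count $2,4,\dots,2n-4$ agree with the paper.
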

\begin{proof} The first isomorphism is by Lemma \ref{T:Scutinvariants} and the second isomorphism follows from \cite[Corollary 4.4]{LR}.
Since $T$ is a normal subgroup of $N,$ we can first compute the $S^{\bullet}(\fa_{\bar{1}}^{*})^{T}=\C[Z_{3}\partial_1^{\ast}, \dotsc , Z_{n}\partial_1^{\ast}].$
It's straightforward to check that $\Sigma_{n-2}$ acts by permuting the variables $Z_{3}\partial_1^{\ast}, \dotsc , Z_{n}\partial_1^{\ast}.$  By a well known result on invariants under a symmetric group, it follows that $\HH^\bullet(\fg, \fg_0; \C)$ is a polynomial ring generated by elementary symmetric polynomials in the $Z_{i}\partial_{1}^{*}$, where degree of $Z_{i}\partial_{1}^{*}$ is two..

\end{proof}

\subsection{}
Let $\fa_{\1} \subset \fg_{\1}$ be as in (\ref{E:a1})  and let $\fa_{\0} = \Lie(T)= \fh \subseteq \fg_0$. One can easily verify that $\fa= \fa_{\0} \oplus \fa_{\1}$ is a Lie subsuperalgebra of $\fg$.
$$\HH^\bullet(\fa, \fa_{\0}; \C) \cong S^{\bullet}(\fa_{\1}^{*})^{\fa_{\0}} \cong S^{\bullet}(\fa_{\1}^{*})^T$$

\subsection {Detecting subalagebra for $\fg = \overline{S}(n)$} \label{S:detecting}

Let $\fe_{\1} = \fa_{\1}$ and let $\fe_{\0} = \Lie(T_{n-1})$, where
$$T_{n-1}:= \{\operatorname{diag}(t_1,  \dots, t_n) \in T\mid t_{1}= 1\}. $$ Set $\fe = \fe_{\0} \oplus \fe_{\1}$. One can directly verify that $\fe$ is a Lie subsuperalgebra of $\fg$.
The classical Lie superalgebra $\fe$ has the following important property

\begin{equation}\label{E:Stildefbrackets}
\left[\fe_{\0},\fe_{\0} \right]= \left[\fe_{\0},\fe_{\1} \right]=0.
\end{equation}

Since $\HH^{\bullet}(\fa,\fa_{\0};\C ) \cong \HH^{\bullet}(\fe,\fe_{\0};\C )^T$ and $\HH^\bullet (\fg,\fg_{0};\C ) \cong \HH^{\bullet}(\ff,\ff_{\0};\C )^{\Sigma_{n-2}}$ we have  the following isomorphism
$$\HH^\bullet (\fg,\fg_{0};\C ) \cong \HH^{\bullet}(\fe,\fe_{\0};\C )^{T\Sigma_{n-2}}.$$

From the discussion above one sees that $\fe$ is a detecting subalgebra for the pair $(\fg, \fg_0).$

Since by \eqref{E:Stildefbrackets} the structure of $\fe$ is of the type considered in \cite[Sections 5, 6]{BKN1}, \cite[Theorem 6.3.2]{BKN1} implies that one has a canonical isomorphism
\begin{equation}\label{E:rankiso}
\V _{\left(\fe,\fe_{\0} \right)}(M) \cong \V_{\fe}^{\rank}(M)
\end{equation}
for any finite dimensional $\fe$-supermodule $M$ which is an object of $\mathcal{C}_{(\fe,\fe_{\0})}.$ We identify the rank and support varieties of $\fe$ via this isomorphism.



\subsection{ Support varieties of simple modules} 

 Since $\HH^\bullet (\fe,\fe_{\0};\C ) \cong S(\fe_{\1}^\ast)^{\fe_{\0}}= S(\fe_{\1}^\ast) \cong \C[\partial_1^\ast, Z_2, \dots, Z_n]$  , we can  identify the support variety of any finite dimensional module $M \in \mathcal{C}_{(\fe,\fe_{\0})}$ with the conical affine subvariety of the affine $(n-1)$-space
\[
\operatorname{MaxSpec}\left( \HH^\bullet (\fe,\fe_{\0};\C ) \right) = \V_{(\fe,\fe_{\0})}(\C ) \cong \mathbb{A}^{n-1}
\] defined by the ideal $I_{(\fe,\fe_{\0})}(M).$

The inclusion $\fe \hookrightarrow \fg$ induces a restriction map on cohomology which, in turn, induces maps of support varieties.  That is, given supermodules $M$ and $N$ in $\mathcal{C}_{(\fg,\fg_{0})}$ one has $M \in \mathcal{C}_{(\fe,\fe_{\0})}$ by restriction to $\fe$ and one has maps of varieties
\begin{align*}
\resstar &:\V_{(\fe, \fe_{\0})}(M,N) \to \V_{(\fg, \fg_{0})}(M,N),\\
\resstar &:\V_{(\fe, \fe_{\0})}(M) \to \V_{(\fg, \fg_{0})}(M).
\end{align*}
Similarly the inclusion $\fe \hookrightarrow \ff$ induces the following maps of varieties
\begin{align*}
\resstar &:\V_{(\fe, \fe_{\0})}(M,N) \to \V_{(\fa, \fa_{\0})}(M,N),\\
\resstar &:\V_{(\fe, \fe_{\0})}(M) \to \V_{(\fa, \fa_{\0})}(M).
\end{align*}
for finite dimensional modules $M, N \in \mathcal{C}_{(\fa,\fa_{\0})}$.

By arguing exactly as in \cite[Theorem 6.4.1]{BAKN} one proves that
\begin{equation}\label{E:suppae}
\V_{(\fa, \fa_{\0})}(M) \cong \V_{(\fe, \fe_{\0})}(M)/T
\end{equation}

for any finite dimensional module $M \in \mathcal{C}_{(\fa,\fa_{\0})}$.

\subsection{} In this subsection we compute support varieties of all simple finite dimensional $\fg$-modules. The first step in this calculation is to compute support variety of Kac modules. 
Our results show that $L(\lambda)$ is typical if and only if the support variety of $L(\lambda)$ is zero.
\begin{prop}\label{Kacsupport:P}  Let $\la \in X_{0}^{+}$, $K(\la)$ be the associated  Kac supermodule. Then  $$\V_{(\fg, \fg_0)}(K(\la))=\{0\}.$$
\end{prop}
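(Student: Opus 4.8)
The plan is to show that the Kac supermodule $K(\la)$ is projective in the category $\mathcal{C}_{(\fg,\fg_0)}$, which by Lemma~\ref{L:super}(3) (transported to $\fg$ via the detecting subalgebra $\fe$) or more directly by the annihilator-ideal description of support varieties forces $\V_{(\fg,\fg_0)}(K(\la)) = \{0\}$. The key structural fact is that $K(\la) = U(\fg)\otimes_{U(\fg_0\oplus\fg^+)} L_0(\la)$ is induced from a module on which $\fg^+$ acts trivially, and $\fg = \fg_{-1}\oplus\fg_0\oplus\fg^+$ with $\fg_0\oplus\fg^+$ a parabolic subalgebra containing $\fg_0$.

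First I would recall that relative projectivity in $\mathcal{C}_{(\fg,\fg_0)}$ is detected by the pair $(U(\fg),U(\fg_0))$: a module is projective (equivalently injective, since the relevant category has enough of both and $\fg_0$ is reductive) exactly when it is a direct summand of a module induced from $\fg_0$. I claim $K(\la)$ is such a summand. Since $L_0(\la)$ is a simple (hence projective and injective) $\fg_0$-module, the induced module $U(\fg)\otimes_{U(\fg_0)} L_0(\la)$ is projective in $\mathcal{C}_{(\fg,\fg_0)}$. By the tensor identity / PBW for Lie superalgebras, $U(\fg)\otimes_{U(\fg_0)} L_0(\la) \cong \Lambda_s(\fg^+)\otimes U(\fg)\otimes_{U(\fg_0\oplus\fg^+)}L_0(\la)$ as $\fg_0$-modules, and more precisely it decomposes (using that $\fg^+$ acts locally nilpotently and the grading) so that $K(\la)$ appears as a direct summand — alternatively, one uses that $K(\la)$ is the induced module from the parabolic $\fg_0\oplus\fg^+$ and that induction from $\fg_0$ factors as induction from $\fg_0$ to $\fg_0\oplus\fg^+$ followed by induction to $\fg$, combined with the fact that $L_0(\la)$ with trivial $\fg^+$-action is a summand of $U(\fg_0\oplus\fg^+)\otimes_{U(\fg_0)}L_0(\la)$ because $\fg^+$ is an ideal acting by zero is not quite right, so one instead invokes the standard relative-cohomology vanishing $\Ext^{\bullet}_{(U(\fg),U(\fg_0))}(\C, \Hom_\C(K(\la),K(\la))) $ computed via the Koszul-type resolution built from $\Lambda_s(\fg/\fg_0)$.

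Concretely, the cleanest route: by Proposition on cochains and the isomorphism $\Ext^{\bullet}_{\mathcal{C}_{(\fg,\fg_0)}}(M,N)\cong \HH^\bullet(\fg,\fg_0; M^*\otimes N)$, it suffices to show $\HH^{\bullet}(\fg,\fg_0; K(\la)^*\otimes K(\la))$ is annihilated by the augmentation ideal of $\HH^\bullet(\fg,\fg_0;\C)$, i.e. that $\V_{(\fg,\fg_0)}(K(\la))\subseteq \{0\}$. Using the detecting subalgebra $\fe$ and the embedding \eqref{E:Nresstarimage}, it is enough to show $\V_{(\fe,\fe_{\0})}(K(\la)) = \{0\}$, and by the rank variety description (Hypothesis~\ref{A3}) this amounts to showing that $K(\la)$ restricted to $U(\langle x\rangle)$ is projective for every $0\neq x\in\fe_{\1}\subseteq\fg_{\1}$. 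Since $\fe_{\1}=\fa_{\1}$ is spanned by $\partial_1$ and the vectors $\xi_1\xi_2\partial_2 - \xi_1\xi_i\partial_i$, and a general $x$ is a combination of these, I would show $x$ acts on $K(\la)$ with a free action (no kernel on the odd part modulo image), exploiting that $\partial_1 \in \fg_{-1}$ lowers $\Z$-degree while the $\xi_1\xi_i\partial_i$ type vectors raise it, so $x$ has a component in $\fg_{-1}$ acting surjectively onto a PBW-complementary piece; freeness over $U(\langle x\rangle)\cong\Lambda[x]$ follows since $\langle x\rangle$ is purely odd one-dimensional and $K(\la)$ is a free module over the exterior subalgebra generated by the degree $-1$ part.

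The main obstacle I expect is the last step: verifying rigorously that $K(\la)$ is free as a module over $U(\langle x\rangle)$ for \emph{every} nonzero homogeneous (in fact arbitrary) $x\in\fe_{\1}$, not just for the basis vector $\partial_1$. The issue is that $\fe_{\1}$ contains vectors lying in $\fg_1$ (positive degree) as well as $\partial_1$ (degree $-1$), and a generic element mixes these; one must check $x^2 = \tfrac12[x,x]$ still acts appropriately and that the PBW filtration argument — $K(\la)$ is cofree over $\Lambda_s(\fg_{-1})$ as a vector space with compatible $\fg_{-1}$-action — really does yield freeness over the one-parameter subalgebra. This is precisely the kind of computation handled in \cite[Section 6]{BKN1} and \cite[Section 8]{BAKN} for $W(n)$, so I would follow those arguments, using that $\overline{S}(n)$ is a graded subalgebra of $W(n)$ with $\overline{S}(n)_0 \cong W(n)_0$ to transport the relevant freeness statements, and then conclude $\V_{(\fe,\fe_{\0})}(K(\la))=\{0\}$, hence $\V_{(\fg,\fg_0)}(K(\la)) = \resstar(\V_{(\fe,\fe_{\0})}(K(\la))) = \{0\}$.
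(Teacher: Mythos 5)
Your strategy has two genuine breaks in it, and unfortunately they are fatal rather than repairable details.

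First, the transfer from $\fe$ back to $\fg$ goes the wrong way. The only relation the paper establishes between the two varieties for a general module is \eqref{E:Nresstarimage}, namely $\resstar(\V_{(\fe,\fe_{\0})}(M)) \hookrightarrow \V_{(\fg,\fg_0)}(M)$. So even if you knew $\V_{(\fe,\fe_{\0})}(K(\la))=\{0\}$, all you could conclude is $\{0\}\subseteq \V_{(\fg,\fg_0)}(K(\la))$, which is vacuous. Your final equality $\V_{(\fg,\fg_0)}(K(\la))=\resstar(\V_{(\fe,\fe_{\0})}(K(\la)))$ is a detection statement that the paper proves only for Carlson modules (Lemma~\ref{L:gsupportzeta}), and there only by a separate argument showing $\zeta^2$ annihilates the relevant cohomology; no such statement is available for arbitrary modules, and the proposition you are proving cannot be made to rest on it.

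Second, the intermediate claim $\V_{(\fe,\fe_{\0})}(K(\la))=\{0\}$ is actually false --- the obstacle you flagged at the end is real. Take $n=3$, $\la=0$, and $x=\xi_1\xi_2\partial_2-\xi_1\xi_3\partial_3\in\fe_{\1}\cap\fg_1$. On $K(0)\cong\Lambda_s(\fg_{-1})\otimes\C$ one computes that $x$ kills $1\otimes 1$, all of $\partial_i\otimes 1$, $\partial_2\partial_3\otimes 1$ and $\partial_1\partial_2\partial_3\otimes 1$ (the brackets $[x,\partial_2],[x,\partial_3]$ are root vectors of $\fg_0$ acting by zero on $\C_0$), so $\dim\ker x=6\neq 4$ and $K(0)$ is not free over $U(\langle x\rangle)\cong\Lambda[x]$. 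Thus $x\in\V_{\fe}^{\rank}(K(0))$. This does not contradict the proposition, because $\resstar$ collapses the hyperplane $\{\partial_1^{\ast}=0\}$ (i.e.\ $\fe_{\1}\cap\fg_1$) to the origin; but it does mean freeness fails precisely for the elements of $\fe_{\1}$ lying in positive degree, so the rank-variety route cannot produce the statement. Your opening alternative --- that $K(\la)$ is projective in $\mathcal{C}_{(\fg,\fg_0)}$ --- is also stronger than what is true for atypical $\la$ (your own tensor-identity argument collapses, as you note). The paper instead proves only the weaker statement that $\Ext^{n}_{\mathcal{C}_{(\fg,\fg_0)}}(K(\la),K(\la))$ vanishes for $n\gg 0$: Frobenius reciprocity reduces this to $\Ext^{n}_{\mathcal{C}_{(\fg_0\oplus\fg^+,\fg_0)}}(L_0(\la),K(\la))$, the Lyndon--Hochschild--Serre spectral sequence for $\fg^+\trianglelefteq\fg_0\oplus\fg^+$ collapses by semisimplicity of $\mathcal{C}_{(\fg_0,\fg_0)}$, and a weight argument as in \cite[Proposition 7.1.1]{BAKN} kills $\Hom_{\fg_0}(L_0(\la),\HH^n(\fg^+,K(\la)))$ in high degrees. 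That is the route you need.
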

\begin{proof}It is enough to
show that $\Ext^{n}_{\mathcal{C}_{(\fg,\fg_{0})}}(K(\lambda), K(\la))=0$, for $n>>0$.

By Frobenius reciprocity, for all $n$ we have
$$\Ext^{n}_{\mathcal{C}_{(\fg,\fg_0)}}(K(\lambda),K(\la))\cong \Ext^{n}_{\mathcal{C}_{(\fg_{0}\oplus
\fg^{+},\fg_0)}}( L_{0}(\lambda),K(\la)).$$
Since $\fg^{+}$ is an ideal in $\fg_{0}\oplus\fg^{+}$ one can apply the Lyndon-Hochschild-Serre spectral sequence to $(\fg^{+}, \{0 \}) \subseteq (\fg_{0}\oplus \fg^{+},\fg_{0})$:
$$E_{2}^{i,j}=\Ext^{i}_{\mathcal{C}_{(\fg_0,\fg_0)}}(L_{0}(\lambda),
\Ext^{j}_{\mathcal{C}_{(\fg^{+},\{0\})}}({\mathbb C}, K(\la)))\Rightarrow
\Ext^{i+j}_{\mathcal{C}_{(\fg_{0}\oplus \fg^{+},\fg_0)}}(L_{0}(\lambda), K(\la)).$$

Since ${\mathcal{C}_{(\fg_{0},\fg_{0})}}$ consists of $\fg_{0}$-supermodules which are finitely semisimple over $\fg_{0},$ this spectral sequence is zero for $i >0.$ That is, it collapses
at the $E_{2}$ page and yields
\begin{equation}\label{E:iso}
\Hom_{\fg_{0}}(L_{0}(\lambda), \Ext^{n}_{\mathcal{C}_{(\fg^{+},\{0\})}}({\mathbb C}, K(\la))
\cong \Ext^{n}_{\mathcal{C}_{(\fg_{0}\oplus \fg^{+},\fg_0)}}(L_{0}(\lambda),K(\la)).
\end{equation}
Now one can argue as in the proof of the \cite[Proposition 7.1.1]{BAKN}
\end{proof}

Recall that superdimension of a  module $M$ is
defined to be $ \dim M_{\0} -\dim M_{\1}$ and  the set of atypical weights for $\fg$ is given by
$$\Omega =\{a\varepsilon_1+\dots.+a\varepsilon_{i-1}+b\varepsilon_i+(a+1)\varepsilon_{i+1}+ \dots +(a+1)\varepsilon_n\mid a, b \in \C, 1\leq i\leq n\}.$$

\begin{thm} Let $\la \in X_{0}^{+}$ and let  $L(\la)$ be the finite dimensional simple $\fg$-supermodule
with highest weight $\la$. Then
\begin{itemize}
\item[(a)] If $\la \notin \Omega$ then $\V_{(\fg, \fg_0)}(L(\la))=\{0\}$.
\item[(b)] If $\la \in \Omega$ then $\V_{(\fg, \fg_0)}(L(\la))=\V_{(\fg, \fg_0)}(\C)$.
In this case
the support variety has dimension $n-2$.
\end{itemize}
\end{thm}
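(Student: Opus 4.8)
The plan is to reduce both parts to statements about the detecting subalgebra $\fe$ and then invoke the results already established. For part (a), suppose $\la \notin \Omega$; then $\la$ is typical, so the Kac module $K(\la)$ is simple and hence $L(\la) \cong K(\la)$. By Proposition~\ref{Kacsupport:P} we immediately get $\V_{(\fg, \fg_0)}(L(\la)) = \V_{(\fg,\fg_0)}(K(\la)) = \{0\}$, which finishes (a). This is the easy half.

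For part (b), assume $\la \in \Omega$. First I would handle the case $\la = a\sigma$: here $\dim L(a\sigma) = 1$, so as a one-dimensional module over $\fg_0 \cong \gl(n)$ it is concentrated in a single $\Z_2$-degree; I would check its superdimension is nonzero and then apply the detecting-subalgebra machinery — specifically, by \eqref{E:Nresstarimage} and Lemma~\ref{L:super}(4) (applied to the restriction $M|_{\fe}$, whose superdimension equals that of $M$), we get $\V_{(\fe,\fe_{\0})}(L(a\sigma)) = \V_{(\fe,\fe_{\0})}(\C)$, and pushing forward via $\resstar$ together with the identification $\V_{(\fe,\fe_{\0})}(\C)/W \cong \V_{(\fg,\fg_0)}(\C)$ yields $\V_{(\fg,\fg_0)}(L(a\sigma)) = \V_{(\fg,\fg_0)}(\C)$. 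For general $\la \in \Omega$ with $\la \neq a\sigma$, I would use the tensor decomposition $L(\la) \cong L(\overline{\la}) \otimes L(a\sigma)$ recorded just before Theorem~\ref{T:Scutinvariants}. The key point is that $\overline{\la}$ is atypical for $W(n)$, and by the result of Serganova quoted (typical $\iff$ superdimension zero for $W(n)$, hence for $\fg$), atypicality of $L(\overline{\la})$ forces its superdimension to be nonzero; tensoring with the one-dimensional $L(a\sigma)$ preserves this, so $\sdim L(\la) \neq 0$. Then Lemma~\ref{L:super}(4) applied to $L(\la)|_{\fe}$ gives $\V_{(\fe,\fe_{\0})}(L(\la)) = \V_{(\fe,\fe_{\0})}(\C)$, and the same $\resstar$-pushforward argument as above gives $\V_{(\fg,\fg_0)}(L(\la)) = \V_{(\fg,\fg_0)}(\C)$.

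The dimension claim follows from Theorem~\ref{T:ScalculatingR}: the cohomology ring $\HH^\bullet(\fg,\fg_0;\C)$ is a polynomial ring in $n-2$ variables, so $\V_{(\fg,\fg_0)}(\C) = \MaxSpec \HH^\bullet(\fg,\fg_0;\C)$ has dimension $n-2$.

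\textbf{Main obstacle.} The delicate point is the implication ``$L(\la)$ atypical $\Rightarrow \sdim L(\la) \neq 0$'' for $\fg = \overline{S}(n)$. The excerpt asserts this for $W(n)$ simple modules (typical iff superdimension zero), and I would need to transfer it cleanly to $\overline{S}(n)$ via the decomposition $L(\la) \cong L(\overline{\la}) \otimes L(a\sigma)$ and the compatibility of atypicality of $\la$ for $\fg$ with atypicality of $\overline{\la}$ for $W(n)$; making sure the superdimension bookkeeping and the restriction to $\fe$ are consistent is where the real care is needed. Everything downstream — invoking Lemma~\ref{L:super}(4), the $W$-quotient identification, and the dimension count — is then routine given Hypotheses~\ref{A1}, \ref{A2}, \ref{A3}, which were verified for $(\fg,\fg_0)$ in Subsection~\ref{S:detecting}.
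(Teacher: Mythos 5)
Your proposal is correct and follows essentially the same route as the paper: part (a) via $L(\la)\cong K(\la)$ for typical $\la$ and Proposition~\ref{Kacsupport:P}, and part (b) by showing $\sdim L(\la)\neq 0$ (splitting into the cases $\la=a\sigma$ and $\la\neq a\sigma$ with the tensor decomposition $L(\la)\cong L(\overline{\la})\otimes L(a\sigma)$ and Serganova's criterion for $W(n)$), then applying Lemma~\ref{L:super}(4) and sandwiching via $\resstar$. The only cosmetic difference is that the paper pushes forward through the intermediate subalgebra $\fa$ using \eqref{E:suppae}, whereas you go directly from $\fe$ to $\fg$ via \eqref{E:Nresstarimage}; the argument is the same.
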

\begin{proof}
\begin{itemize}

\item[(a)] If $\la \notin \Omega$, i.e., $\la$ is typical, then by \cite[Theorem 6.3]{Ser} $L(\la) = K(\la)$.
Since $\V_{(\fg, \fg_0)}(K(\la))=\{0\}$ by Proposition \ref{Kacsupport:P},  the result follows.

 \item[(b)] First observe that
$$\Omega \cap X_{0}^{+}=\{a\varepsilon_{1}+a\varepsilon{_2}\dots +a\varepsilon _{n-1}+b\varepsilon_n \mid a,b\in {\Z}, b\leq a\}. $$
Furthermore, it is enough to prove  that if $\la \in \Omega\cap X_{0}^{+}$ then
$\V_{({\fe},{\fe}_{\0})}(L(\la))=\V_{({\fe},{\fe}_{\0})}(\C)$. Then by (\ref{E:suppae}) we have
$$ \V_{(\fa, \fa_{\0})}(L(\lambda))\cong \V_{(\fe, \fe_{\0})}(L(\lambda))/T = \V_{(\fe, \fe_{\0})}(\C)/T \cong \V_{(\fa, \fa_{\0})}(\C) $$
this in turn gives

\[
\V_{(\fg, \fg_{0})}(\C) = \resstar \left( \V_{(\fa, \fa_{\0})}(\C )\right)=\resstar \left( \V_{(\fa, \fa_{\0})}(L(\lambda))\right) \subseteq \V_{(\fg, \fg_0)}(L(\la)) \subseteq  \V_{(\fg, \fg_0)}(\C),
\]
which implies the result for $\fg.$

There are two cases we need to consider: $\la = a\sigma$ and $\la \neq a\sigma$, where $\sigma =\varepsilon_1+...+\varepsilon_n$.

If $\la=a\sigma$, since $\dim L(a\sigma)=1$, super dimension of  $L(\la)= \dim L(\la)_{\0} - \dim L(\la)_{\1} $ is nonzero, then by Lemma \ref{L:super} (4) we have
$$\V_{({\fe},{\fe}_{\0})}(L(\la))=\V_{({\fe},{\fe}_{\0})}(\C).$$

Now if $\la \neq  a\sigma$, then
there exists an atypical weight $\bar{\la}$ for $W(n)$ such that $L(\la) \cong L(\bar{\la})\otimes L(a\sigma)$. We also know from the work of Serganova \cite{Ser} that
  a simple finite dimensional module for $W(n)$ is typical if and only if its superdimension is zero. Since $\bar{\la}$ is atypical for $W(n)$, super dimension of $ L(\bar{\la}) $ is nonzero and thus super dimension of $ L(\la) $ which is equal to  superdimension of $L(\bar{\la}) $ is nonzero.
  Now again from  Lemma \ref{L:super} (4) it follows that
$$\V_{({\mathfrak e},{\fe}_{\0})}(L(\la))=\V_{({\fe},{\fe}_{\0})}(\C).$$

This completes the proof.
\end{itemize}
\end{proof}

\subsection{} 

In this subsection we show that a realization theorem holds for $\fg$.
Recall that $\mathcal{C}_{(\fg, \fg_0)}$ denotes the category of $\fg$-modules which are finitely semisimple as $\fg_0$-module
and $L_{\zeta}$ denotes the Carlson module corresponding to the homogeneous element $\zeta \in \HH^{\bullet}(\fg, \fg_0; \C)$. Since the Carlson modules may not be finite dimensional  we are going to work with relative support varieties.

\begin{thm} Let $X$ be a conical subvariety of ${\mathcal V}_{(\fg, \fg_0)}(\C)$.
Then there exists a $\fg$-module  $ M \in \mathcal{C}_{(\fg, \fg_0)}$ such that
$$ \V_{(\fg, \fg_0)}(M,\C)=X.$$
\end{thm}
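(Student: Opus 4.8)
The plan is to reduce this statement to the general realization theorem, Theorem~\ref{T:realization}(b), which was proved under the Hypotheses~\ref{A1}, \ref{A2} and \ref{A3}. So the first and essential task is to verify that the pair $(\fg, \fg_0)$ with $\fg = \overline{S}(n)$ satisfies all three hypotheses. Hypothesis~\ref{A1}, finite generation of $\HH^\bullet(\fg,\fg_0;\C)$, is exactly Theorem~\ref{T:fg}(a). Hypothesis~\ref{A2}, the existence of a detecting subalgebra, is established in Subsection~\ref{S:detecting}: the subalgebra $\fe = \fe_{\0}\oplus\fe_{\1}$ with $\fe_{\1} = \fa_{\1}$ and $\fe_{\0} = \Lie(T_{n-1})$ is classical, and the inclusion $\fe\hookrightarrow\fg$ induces an isomorphism $\HH^\bullet(\fg,\fg_0;\C)\cong\HH^\bullet(\fe,\fe_{\0};\C)^{T\Sigma_{n-2}}$, with $(-)^{T\Sigma_{n-2}}$ exact since $T\Sigma_{n-2}$ is a product of a torus and a finite group acting on vector spaces over $\C$. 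Hypothesis~\ref{A3}, the rank variety description, is recorded in \eqref{E:rankiso}, which follows from \cite[Theorem 6.3.2]{BKN1} because the brackets \eqref{E:Stildefbrackets} put $\fe$ in the form treated in \cite[Sections 5,6]{BKN1}.

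Once the three hypotheses are in hand, I would simply invoke Theorem~\ref{T:realization}(b): given a conical subvariety $X \subseteq \V_{(\fg,\fg_0)}(\C)$, write $X = \mathcal{Z}(\zeta_1)\cap\dots\cap\mathcal{Z}(\zeta_n)$ for homogeneous elements $\zeta_1,\dots,\zeta_n$ generating the defining homogeneous ideal of $X$ (possible since $\HH^\bullet(\fg,\fg_0;\C)$ is a finitely generated commutative graded ring by Theorem~\ref{T:fg}(a)), form $M = L_{\zeta_1}\otimes\dots\otimes L_{\zeta_n}$, and conclude $\V_{(\fg,\fg_0)}(M,\C) = X$. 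The module $M$ is an object of $\mathcal{C}_{(\fg,\fg_0)}$ because each Carlson module $L_{\zeta_i}$, being a submodule of a syzygy $\Omega^{n_i}(\C)$ computed in $\mathcal{C}_{(\fg,\fg_0)}$, lies in that category, and $\mathcal{C}_{(\fg,\fg_0)}$ is closed under finite tensor products. I should note that since the Carlson modules for $\fg = \overline{S}(n)$ need not be finite dimensional, only the relative support variety $\V_{(\fg,\fg_0)}(M,\C)$ is meaningful here — which is exactly why the statement is phrased with $\V_{(\fg,\fg_0)}(M,\C)$ rather than $\V_{(\fg,\fg_0)}(M)$, and why we use part (b) rather than part (a) of Theorem~\ref{T:realization}.

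The only real work, and the main obstacle, is the careful verification of Hypothesis~\ref{A3} — that $\fe$ genuinely admits a rank variety description in the sense required, i.e.\ $\V_{\fe}^{\rank}(M)\cong\V_{(\fe,\fe_{\0})}(M)$ for every finite dimensional $\fe$-supermodule in $\mathcal{C}_{(\fe,\fe_{\0})}$. This rests on \eqref{E:Stildefbrackets}, which says $[\fe_{\0},\fe_{\0}] = [\fe_{\0},\fe_{\1}] = 0$; this is precisely the structural feature (a "torus plus abelian odd part" acting trivially) that makes $\fe$ fit into the framework of \cite[Sections 5,6]{BKN1}, so that \cite[Theorem 6.3.2]{BKN1} applies verbatim. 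Everything downstream — Lemma~\ref{L:super}, Lemma~\ref{L:esupportzeta}, Lemma~\ref{L:gsupportzeta}, and hence Theorem~\ref{T:realization} — then transfers mechanically. So the proof is essentially a bookkeeping argument: collect the three hypotheses from the preceding subsections and apply the abstract realization theorem. I would write it as: ``By Theorem~\ref{T:fg}(a), the discussion in Subsection~\ref{S:detecting}, and \eqref{E:rankiso}, the pair $(\fg,\fg_0)$ satisfies Hypotheses~\ref{A1}, \ref{A2} and \ref{A3}; the result is now immediate from Theorem~\ref{T:realization}(b).''
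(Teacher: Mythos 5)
Your proposal is correct and follows exactly the paper's own argument: verify Hypotheses~\ref{A1}, \ref{A2} and \ref{A3} via Theorem~\ref{T:fg}(a), the construction of the detecting subalgebra in Subsection~\ref{S:detecting}, and the rank-variety isomorphism \eqref{E:rankiso}, then invoke Theorem~\ref{T:Realization}(b). The additional remarks you make (exactness of $(-)^{T\Sigma_{n-2}}$, membership of the Carlson modules in $\mathcal{C}_{(\fg,\fg_0)}$, and why the relative variety $\V_{(\fg,\fg_0)}(M,\C)$ is the right object when the $L_{\zeta_i}$ may be infinite dimensional) are accurate elaborations of points the paper leaves implicit.
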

\begin{proof} By Theorem \ref{T:fg} we know that  $\HH^{\bullet}(\fg, \fg_0; \C)$ is finitely generated. We have seen in subsection \ref{S:detecting} that the pair $(\fg, \fg_0)$ admits a detecting subalgebra $\fe = \fe_{\0} \oplus \fe_{\1}$ and $\fe-$ support varieties have rank variety description. Thus the pair $(\fg, \fg_0)$ satisfies all assumptions of Theorem \ref{T:Realization} and the result follows.
\end{proof}

\subsection{Acknowledgements}The author would like to thank his PhD advisor Daniel Nakano for helpful discussions.

\end{document}